\title{\Large On the $C_4$-isolation number of a graph}
\author{ {Xiaohua Wei\,$^\text{a}$, Gang Zhang\,$^\text{b}$, Biao Zhao\,$^\text{a}$} \vspace{2mm}\\
	\small  $^\text{a}$\,College of Mathematics and System Sciences, Xinjiang University,\\
	\small  Urumqi, Xinjiang 830046, P.R. China\\
	\small  $^\text{b}$\,School of Mathematical Sciences, Xiamen University,\\
	\small  Xiamen, Fujian 361005, P.R. China\\
}
\date{\small (E-mails: wei\_xhua@163.com, gzh\_ang@163.com, zhb\_xj@163.com)}
\newtheorem{theorem}{Theorem}[section]
\newtheorem{lemma}[theorem]{Lemma}
\newtheorem{conjecture}[theorem]{Conjecture}
\newtheorem{proposition}[theorem]{Proposition}
\newcounter{claimcount}
\def\claimformat{\Alph{claimcount}}
\tikzstyle{vertex}=[circle, draw, inner sep=0pt, minimum size=6pt]
\begin{document}

\maketitle

\small \noindent{\bfseries Abstract} Let $C_k$ be the cycle of length $k$. For any graph $G$, a subset $D \subseteq V(G)$ is a $C_k$-isolating set of $G$ if the graph obtained from $G$ by deleting the closed neighbourhood of $D$ contains no $C_k$ as a subgraph. The $C_k$-isolation number of $G$, denoted by $\iota(G,C_k)$, is the cardinality of a smallest $C_k$-isolating set of $G$. Borg (2020) and Borg et al. (2022) proved that if $G \ncong C_3$ is a connected graph of order $n$ and size $m$, then $\iota(G,C_3) \leq \frac{n}{4}$ and $\iota(G,C_3) \leq \frac{m+1}{5}$. Very recently, Bartolo, Borg and Scicluna showed that if $G$ is a connected graph of order $n$ that is not one of the determined nine graphs, then $\iota(G,C_4) \leq \frac{n}{5}$. In this paper, we prove that if $G \ncong C_4$ is a connected graph of size $m$, then $\iota(G,C_4) \leq \frac{m+1}{6}$, and we characterize the graphs that attain the bound. Moreover, we conjecture that if $G \ncong C_k$ is a connected graph of size $m$, then $\iota(G,C_k) \leq \frac{m+1}{k+2}$.\\
{\bfseries Keywords}: Partial domination; Isolating sets; $C_k$-isolation number

\section {\large Introduction}

Any graph considered in this paper is simple and finite. Let $G$ be a such graph. As usual, we use $V(G)$,$E(G)$, $\delta(G)$ and $\Delta(G)$ to denote the vertex set, edge set, minimum degree and maximum degree of $G$, respectively. We say that $u$ is a {\it neighbor} of $v$ in $G$ if $uv \in E(G)$, while $v$ is also a neighbor of $u$ in $G$. For a vertex $v \in V(G)$, the set $N_G(v)=\{u: u \in V(G) {\text{~and~}} uv \in E(G)\}$ is called the {\it open neighborhood of $v$} in $G$, and the set $N_G[v]=\{v\}\cup N_{G}(v)$ is called the {\it closed neighborhood of $v$} in $G$. For a subset $S \subseteq V(G)$, the set $N_{G}(S)=\bigcup_{v \in S}N_{G}(v)\setminus S$ is called the {\it open neighborhood of $S$} in $G$, and the set $N_{G}[S]=\bigcup_{v \in S}N_{G}[v]$ is called the {\it closed neighborhood of $S$} in $G$. Moreover, for a subgraph $H \subseteq G$, the set $N_{G}(H)=N_{G}(V(H))$ is called the {\it open neighborhood of $H$} in $G$, and the set $N_{G}[H]=N_{G}[V(H)]$ is called the {\it closed neighborhood of $H$} in $G$. Unless otherwise labeled, we may omit the subscript $G$ from the notations above; for example, we abbreviate $N_G(v)$ to $N(v)$. Let $G[S]$ denote the subgraph of $G$ induced by $S$ and $G-S$ denote the subgraph of $G$ induced by $V\setminus S$. A {\it leaf} of $G$ is a vertex of $G$ with degree 1. A {\it tree} is an acyclic connected graph. For any integer $k \geq 1$, denote $[k]=\{1,2,\cdots,k\}$. The readers are referred to \cite{Bollobas1998,Bondy2008} for more notations and terminologies in graph theory that we may use but not explicitly define here.

Let $\mathcal{F}$ be a set of connected graphs. A subset $D \subseteq V(G)$ is an {\it $\mathcal{F}$-isolating set} of a graph $G$ if $G-N[D]$ does not contain members of $\mathcal{F}$ as subgraphs. The {\it $\mathcal{F}$-isolation number} of $G$, denoted by $\iota(G,\mathcal{F})$, is the cardinality of a smallest $\mathcal{F}$-isolating set of $G$. If $\mathcal{F}=\{F\}$, then an $\mathcal{F}$-isolating set of $G$ is simply called an {\it $F$-isolating set} of $G$, while the $\mathcal{F}$-isolation number is called the {\it $F$-isolation number} and denote $\iota(G,F)=\iota(G,\mathcal{F})$ simply.

The problem of $\mathcal{F}$-isolating sets in graphs was first studied  by Caro and Hansberg \cite{Caro2017}, and they showed that if $G \ncong C_5$ is a connected graph of order $n \geq 3$, then $\iota(G,P_2) \leq \frac{n}{3}$, and this bound is sharp. Zhang and Wu in \cite{Zhang2021} and \cite{Zhang2022} separately proved that if $G \notin \{P_3,C_3,C_6\}$ is a connected graph of order $n$, then $\iota(G,P_3) \leq \frac{2n}{7}$, and if $G \ncong C_7$ is a connected graph of order $n$, then $\iota(G,P_4) \leq \frac{n}{4}$, where both bounds are sharp. Recently, Chen and Xu \cite{Chen2023} proved that if $G \ncong C_8$ is a connected graph of order $n$, then $\iota(G,P_5) \leq \frac{2n}{9}$, and this bound is sharp. Based on these results above, it is natural for us to conjecture that if $G \ncong C_{k+3}$ for $k \geq 6$ is a connected graph of order $n$, then $\iota(G,P_k) \leq \frac{2n}{k+4}$. This bound obviously can be attained by the cycle $C_{k+4}$.

For the research on $C_k$-isolation in graphs, there are not too many results yet. Borg \cite{Borg2020} proved that if $G \ncong C_3$ is a connected graph of order $n$, then $\iota(G,\mathcal{C}) \leq \frac{n}{4}$, where $\mathcal{C}=\{C_k: k\geq 3\}$. It is easy to see that any $\mathcal{C}$-isolating set of $G$ is a $C_3$-isolating set of $G$, and thus, $\iota(G,C_3) \leq \frac{n}{4}$. Due to Borg's construction in \cite{Borg2020}, these two bounds are also sharp. Very Recently, Bartolo, Borg and Scicluna \cite{Bartolo2023+} proved that if $G$ is a connected graph of order $n$ that is not one of the determined nine graphs, then $\iota(G,C_4) \leq \frac{n}{5}$, and this bound is sharp.

\begin{theorem}(Borg \cite{Borg2020}).\label{th1.1}
	If $G \ncong C_3$ is a connected graph of order $n$, then $\iota(G,C_3) \leq \frac{n}{4}$. This bound is sharp.
\end{theorem}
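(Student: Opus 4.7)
The natural approach is induction on the order $n$. For the base, triangle-free graphs trivially satisfy $\iota(G,C_3)=0$, and a direct check handles $n\le 4$: the only excluded graph is $C_3$ itself, and otherwise a single vertex isolates every triangle, so $\iota(G,C_3)\le 1\le n/4$. I may therefore assume $n\ge 5$ and $G$ contains a triangle.

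For the inductive step, fix a triangle $T$ in $G$ and, using connectedness of $G$ together with $n\ge 5$, select a vertex $u\in V(T)$ that has a neighbour outside $T$; such a $u$ exists as the first vertex of $T$ encountered on a shortest path from $V(G)\setminus V(T)$ to $T$. Then $|N[u]|\ge 4$. The plan is to place $u$ into the would-be isolating set and recurse on each connected component $H$ of $G-N[u]$. Every such $H$ is connected with $|V(H)|\le n-|N[u]|\le n-4<n$, so provided $H\ncong C_3$ the induction hypothesis yields a set $D_H\subseteq V(H)$ with $|D_H|\le |V(H)|/4$, and then
\[
D\;:=\;\{u\}\cup\bigcup_{H}D_H
\]
is a $C_3$-isolating set of $G$ satisfying $|D|\le 1+(n-|N[u]|)/4\le n/4$, exactly as required.

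The genuine obstacle is that some component of $G-N[u]$ may itself be a $C_3$, for which the induction hypothesis cannot be invoked; a quick count shows that na\"{i}vely paying one extra vertex for each such component forces the strengthened requirement $|N[u]|\ge 4+j$, where $j$ is the number of $C_3$-components of $G-N[u]$. My plan to defuse this is a local swap: if a component $H=xyz$ of $G-N[u]$ is a triangle, then connectedness of $G$ forces some vertex of $H$ --- say $x$ --- to have had a neighbour $w'\in N[u]$ in $G$, so $|N[x]|\ge 4$ and $x$ can take over the role of $u$, absorbing $\{x,y,z\}$ into the peeled closed neighbourhood. To make this work globally I would choose $u$ by an extremality condition (for instance, picking $(T,u)$ that maximises $|N[u]|$ and, subject to that, minimises the number of $C_3$-components of $G-N[u]$), and then discharge the remaining conflicts by inspection of the local triangle structure around $T$. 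I expect the bulk of the technical effort to lie precisely here: one swap designed to absorb a $C_3$-component may create a new one elsewhere, and the careful work is to design a master choice of $u$ (or, when needed, of a pair of vertices played off against each other) for which all such conflicts vanish simultaneously.
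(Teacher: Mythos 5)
Your proposal is not a complete proof: the entire difficulty of the theorem is concentrated exactly at the point where you stop. The inductive skeleton (take a vertex $u$ of a triangle with a neighbour outside it, so $|N[u]|\ge 4$, delete $N[u]$, recurse on the components) is the standard opening move, and your bookkeeping for the good case is fine. But when some component of $G-N[u]$ is a $C_3$, you only describe a hoped-for remedy — a local swap to a vertex $x$ of that triangle, governed by an extremal choice of $(T,u)$ maximising $|N[u]|$ and minimising the number of $C_3$-components — and you yourself concede that a swap "may create a new one elsewhere" and that the "careful work" of making all conflicts vanish simultaneously remains to be done. No argument is given that your extremality condition actually eliminates the bad components, and it is not at all evident that it does: after swapping to $x$, the graph $G-N[x]$ can again contain $C_3$-components (possibly involving $u$ and the original triangle $T$), and nothing in your choice rules out an endless trade of one bad component for another. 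In the published proofs of results of this type (Borg's isolation-of-cycles theorem, and Borg--Fenech--Kaemawichanurat's $K_k$-isolation bound, either of which yields this statement for $k=3$) precisely this case analysis occupies most of the work, typically via a lemma of the flavour of Lemma \ref{lem2.1} here, which lets one recurse even when the leftover pieces attach to the rest by few edges, together with a careful choice of the set that is peeled off. So the verdict is: correct framework, but the key idea that makes the induction close is missing, not merely deferred.

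Two smaller remarks. First, the paper itself does not prove Theorem \ref{th1.1}; it quotes Borg's stronger result that $\iota(G,\mathcal{C})\le \frac{n}{4}$ for the family $\mathcal{C}$ of all cycles and observes that any $\mathcal{C}$-isolating set is a $C_3$-isolating set, so there is no internal proof to compare against — your attempt must stand on its own, and at present it does not. Second, the statement also asserts sharpness, which you do not address; that part is easy (for instance $C_3^+$, the triangle with a pendant edge, has $n=4$ and $\iota(G,C_3)=1=\frac{n}{4}$, and larger extremal graphs are obtained by the ${\rm cons}(T,C_3)$ construction), but it should be said.
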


\begin{theorem}(Bartolo et al. \cite{Bartolo2023+}).\label{th1.2}
	If $G$ is a connected graph of order $n$ that is not one of the nine exceptional graphs, then $\iota(G,C_4) \leq \frac{n}{5}$. This bound is sharp.
\end{theorem}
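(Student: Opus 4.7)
The plan is to prove the bound by induction on the order $n$. The nine exceptional graphs appear as the base cases where the ratio $\iota(G,C_4)/n$ exceeds $1/5$. I would begin by enumerating small connected graphs up to a fixed order (roughly $n\leq 10$) and flagging those for which the bound $n/5$ fails: clearly $C_4$ itself with $\iota=1>4/5$, along with dense small graphs such as $K_4$, $K_{2,3}$, and a handful of further configurations in which $C_4$'s pack too tightly. This finite enumeration simultaneously establishes sharpness on certain exceptions and exposes the extremal structures that will achieve $\iota=n/5$ for large $n$, guiding the later reductions.

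For the inductive step, given $G$ of order $n$ not on the exceptional list, if $G$ is $C_4$-free then $D=\emptyset$ is trivially isolating. Otherwise, the aim is to locate a vertex $v$ with $|N[v]|\geq 5$ such that every component of $G-N[v]$ is either $C_4$-free or satisfies the inductive hypothesis. Writing the components as $G_1,\dots,G_t$ and applying induction to get $C_4$-isolating sets $D_i$ for each, the union $\{v\}\cup \bigcup_i D_i$ isolates all $C_4$'s of $G$ and has cardinality at most $1+(n-5)/5 = n/5$. When $\Delta(G)\geq 4$, this usually succeeds by taking $v$ to be a vertex of maximum degree; the auxiliary trouble is that some component $G_i$ may itself be one of the nine exceptional graphs, in which case I would either exploit slack from $|N[v]|>5$ to absorb the $1/5$-deficit, or re-select $v$ so that the offending component does not arise. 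Keeping track of which components are forbidden after each choice of $v$ is the bookkeeping backbone of the induction.

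The principal obstacle is the subcubic case $\Delta(G)\leq 3$, where any single vertex accounts for at most four vertices and the naive accounting fails. Here I would fix a $C_4 = v_1v_2v_3v_4$ in $G$ and carry out a detailed structural analysis: since each $v_i$ has at most one external neighbor $u_i$, the local configurations of the $C_4$ form a short finite list, distinguished by whether the $u_i$'s coincide, whether they are mutually adjacent, and whether they participate in further $C_4$'s. In each configuration the aim is to select a small set $S\subseteq V(G)$ (typically a vertex of the $C_4$ together with a second vertex chosen from the second neighbourhood, or a pair of well-placed vertices from two overlapping $C_4$'s) such that the amortized removal $|N[S]|/|S|$ attains $5$. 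This may require strengthening the inductive hypothesis to carry additional slack for subcubic substructures so that repeated reductions can chain together without losing the $1/5$ budget. The exhaustive case analysis of subcubic configurations that embed a $C_4$, together with the verification that every such configuration either admits a valid reduction or reduces to one of the nine exceptional graphs, is the step I expect to consume the overwhelming majority of the technical work.
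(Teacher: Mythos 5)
First, note that the paper does not prove this statement: Theorem \ref{th1.2} is quoted from Bartolo, Borg and Scicluna \cite{Bartolo2023+} and used only as background, so there is no in-paper proof to measure your attempt against. Judged on its own terms, your proposal is a strategy outline rather than a proof, and the gaps sit exactly where the theorem's content lives. You never identify the nine exceptional graphs; the finite enumeration ``up to roughly $n\le 10$'' is not carried out, and at least one of your guesses is off ($\iota(K_{2,3},C_4)=1=n/5$, so $K_{2,3}$ satisfies the bound with equality and is not an exception). Without the explicit list you cannot verify the base case, nor the step in which you propose to ``re-select $v$ so that the offending component does not arise.''

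Second, the two places you flag as requiring work are precisely where the proof is. (a) In the $\Delta\ge 4$ step, when a component of $G-N[v]$ is exceptional (for instance a $C_4$ hanging off $N(v)$ by a single edge), neither ``exploit slack from $|N[v]|>5$'' nor ``re-select $v$'' is shown to succeed, and neither does in general without a careful accounting of how many edges join each special component to $N[v]$ and how the isolating vertices are redistributed --- compare the machinery the present paper needs for the analogous situation in its own Theorem \ref{th1.4} (the partition into $\mathcal{H}_1,\mathcal{H}_2,\mathcal{H}_3$ and the split into Subcases 2.1 and 2.2). (b) The entire subcubic case is deferred with the admission that it will consume the overwhelming majority of the technical work. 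A sketch that postpones the identification of the exceptional structures, the resolution of exceptional components in the induction, and the whole $\Delta\le 3$ analysis has postponed essentially everything; what remains --- deleting $N[v]$ and applying induction to the components via the analogue of Lemma \ref{lem2.1} --- is the standard and uncontroversial part of every argument in this literature.
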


So far, there have been a few results in the topic of $\mathcal{F}$-isolation. Except $P_k$- and $C_k$-isolation, we refer the readers to \cite{Borg2023,Borg&Fenech2020,Borg&Kaemawichanurat2020,Borg&Kaemawichanurat2023,Favaron2021,Kaemawichanurat2023,Huang2023+,Yan2022} for more types of isolation for certain subgraph structures, such as $K_k$-isolation, $K_{1,k}$-isolation and so on.

\vspace{3mm}
For an integer $k \geq 3$, let $C_k^+$ denote the graph obtained from a $C_k$ by adding a pendant edge to a vertex of it. A set of graphs $\mathcal{G}_k$ is recursively defined as follows:

(i) $C_k^+ \in \mathcal{G}_k$;

(ii) if $G' \in \mathcal{G}_k$, then $G \in \mathcal{G}_k$, where $G$ is the graph obtained from $G'$ by adding an edge which joins the leaf of a new $C_k^+$ to a vertex not on a $C_k$ of $G'$, or further,

(ii') if $G',G'' \in \mathcal{G}_k$, then $G \in \mathcal{G}_k$, where $G$ is the graph obtained from $G'$ and $G''$ by adding an edge which joins a vertex not on a $C_k$ of $G'$ and a vertex not on a $C_k$ of $G''$.

In practice, there is an equivalent construction of $\mathcal{G}_k$. Let ${\rm cons}(T,C_k)$ be the graph obtained from a tree $T$ and $V(T)$ copies of $C_k$ (vertex-disjoint of each other) such that each vertex of $T$ is joined to a vertex of a $C_k$ by one edge. We can easily see that $\mathcal{G}_k=\{{\rm cons}(T,C_k): T \text{~is a tree}\}$ also.

\vspace{3mm}
Borg, Fenech and Kaemawichanurat \cite{Borg&Fenech2022} proved that if $G$ is a connected graph of size $m$ that is not a $K_k$, then $\iota(G,K_k) \leq \frac{m+1}{\binom{k}{2}+2}$. Moreover, they characterized the graphs that attain the upper bound. As an immediate consequence of their result for the case of $k=3$, the following theorem holds.

\begin{theorem}(Borg et al. \cite{Borg&Fenech2022}).\label{th1.3}
	If $G \ncong C_3$ is a connected graph of size $m$, then $\iota(G,C_3) \leq \frac{m+1}{5}$, with equality if and only if $G \in \mathcal{G}_3$.
\end{theorem}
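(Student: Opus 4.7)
The plan is to derive Theorem \ref{th1.3} as a direct specialization of the general $K_k$-isolation theorem of Borg, Fenech and Kaemawichanurat cited immediately above it. The key (trivial) observation is that $K_3$ and $C_3$ denote the same graph — the triangle — so for every graph $G$ we have $\iota(G,C_3)=\iota(G,K_3)$ and the hypothesis $G\ncong C_3$ is the same as $G\ncong K_3$. Thus both the bound and the extremal characterization are literally what the general theorem says when $k=3$; no new isolating-set argument is needed.

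For the inequality, I would simply substitute $k=3$ into the general bound $\iota(G,K_k)\le\frac{m+1}{\binom{k}{2}+2}$, obtaining
\[
\iota(G,C_3) \;=\; \iota(G,K_3) \;\le\; \frac{m+1}{\binom{3}{2}+2} \;=\; \frac{m+1}{5},
\]
which is exactly the claim. The hypothesis $G\ncong C_3$ is inherited directly from the $G\ncong K_k$ hypothesis of the general theorem.

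For the equality case, I would verify that the extremal family for $K_3$ produced by the Borg--Fenech--Kaemawichanurat characterization coincides with $\mathcal{G}_3$ as defined in this paper. Their extremal graphs are obtained by gluing copies of $K_k$ through a tree-like backbone — each $K_k$ is attached by a single pendant edge to a backbone vertex — so when $k=3$ this is precisely the description $\mathrm{cons}(T,C_3)$ for some tree $T$, which the excerpt already notes is an equivalent formulation of $\mathcal{G}_3$. The only real (and minor) obstacle is to check that the two recursive descriptions match: the rules (i), (ii), (ii') in the definition of $\mathcal{G}_3$ on one hand, and the tree-assembly $\mathrm{cons}(T,K_3)$ used in the general theorem on the other. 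I would do this by induction on the number of triangles (equivalently on $|V(T)|$): the base case $T=K_1$ gives $C_3^+$, matching rule (i); extending $T$ by a pendant vertex corresponds to rule (ii); and joining two trees by an edge between non-triangle vertices corresponds to rule (ii'). With this equivalence established, the extremal characterization transfers verbatim from the general theorem, completing the proof.
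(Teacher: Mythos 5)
Your proposal is correct and matches the paper exactly: the paper introduces Theorem \ref{th1.3} as ``an immediate consequence'' of the Borg--Fenech--Kaemawichanurat bound $\iota(G,K_k)\le\frac{m+1}{\binom{k}{2}+2}$ specialized to $k=3$, using precisely the identification $K_3=C_3$ and the transfer of their extremal characterization to $\mathcal{G}_3$. No further argument is given in the paper, so your additional care in matching the two recursive descriptions of the extremal family is, if anything, more detail than the source provides.
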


We use $K_4^-$ to denote the {\it diamond graph} that is obtained from a $K_4$ by deleting an edge. Inspired by the aforementioned work, we shall in this paper establish another upper bound on $\iota(G,C_4)$ of a graph $G$ in terms of its size.

\begin{theorem}\label{th1.4}
	If $G \ncong C_4$ is a connected graph of size $m$, then $$\iota(G,C_4) \leq \frac{m+1}{6}.$$ Moreover, the bound is attained if and only if $G \in \{K_4^-\} \cup \mathcal{G}_4$.
\end{theorem}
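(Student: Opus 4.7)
The plan is to proceed by strong induction on $m$. If $G$ contains no $C_4$ subgraph then $\iota(G,C_4)=0$ and the bound is immediate, so I may assume $G$ contains a $C_4$; small values of $m$ are handled by inspection (in particular $m=5$ produces the extremal graph $K_4^-$, and $m=6$ produces $C_4^+$, the base case of $\mathcal{G}_4$).

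The reductive framework is the following. For any vertex $v\in V(G)$, set $G'=G-N[v]$ and $L(v)=E(G)\setminus E(G')$; let $r(v)$ denote the number of connected components of $G'$ that contain a $C_4$, $t(v)$ the number of components isomorphic to $C_4$, and $E_0(v)$ the total number of edges in the $C_4$-free components of $G'$. Applying the induction hypothesis componentwise (a component $H\ncong C_4$ that contains a $C_4$ contributes at most $(|E(H)|+1)/6$, a component isomorphic to $C_4$ contributes exactly $1$, and a $C_4$-free component contributes $0$), the inequality $\iota(G,C_4)\leq 1+\iota(G',C_4)$ simplifies to
\[
\iota(G,C_4)\;\leq\;\frac{6+r(v)+t(v)+|E(G')|-E_0(v)}{6}.
\]
Hence $\iota(G,C_4)\leq (m+1)/6$ follows as soon as one locates a vertex $v$ satisfying the \emph{key inequality} $|L(v)|+E_0(v)\geq 5+r(v)+t(v)$. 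A short direct check shows the key inequality holds with equality at any degree-$3$ vertex of $K_4^-$ and at the cycle-tree attachment vertex $a_\ell$ of any $G\in\mathcal{G}_4$, which is consistent with tightness on $\{K_4^-\}\cup\mathcal{G}_4$.

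The combinatorial core of the proof is to exhibit such a $v$, which I would do by case analysis on the local structure around a chosen $C_4$ $C=u_1u_2u_3u_4u_1$. If $V(G)=V(C)$ then $G\in\{K_4^-,K_4\}$ and the theorem is verified directly. Otherwise, I split on how $C$ attaches to the rest of $G$: (a) two $C_4$'s overlap in $G$ (via a chord, a shared edge, or a shared vertex), producing a dense local configuration such as $K_4^-$, $K_{2,3}$ or the book graph, at whose high-degree vertex the key inequality holds with slack; (b) some $u_i$ has an external neighbor that itself lies on a $C_4$, handled analogously; (c) otherwise $C$ is attached to the remainder only through pendant edges to vertices lying on no $C_4$, so choosing $v=u_i$ for $u_i$ adjacent to such an external attachment makes $G'$ a disjoint union of the singleton $u_{i+2}$ and a strictly smaller connected graph, for which the key inequality is verified by a direct edge count.

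For the extremal characterization I would trace equality backwards through the induction: if $\iota(G,C_4)=(m+1)/6$ then the key inequality must hold with equality at $v$, and every component of $G'$ contributing nonzero isolation number must itself attain the bound; by induction each such component is isomorphic to $C_4$ or lies in $\{K_4^-\}\cup\mathcal{G}_4$, and the tight local configuration forced by equality in the key inequality matches exactly either the graph $K_4^-$ or the recursive step building $\mathcal{G}_4$ (attaching a new $C_4^+$ to a tree-vertex via a pendant edge). The main obstacle will be the case analysis for the key inequality: unlike the $C_3$ setting, two $C_4$'s can overlap in several genuinely different ways, and each overlap pattern must be analyzed with enough slack to preclude spurious extremal configurations outside $\{K_4^-\}\cup\mathcal{G}_4$.
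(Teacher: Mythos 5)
Your reduction to the ``key inequality'' is where the argument breaks: the whole bound rests on finding a \emph{single} vertex $v$ for which $\iota(G,C_4)\le 1+\iota(G-N[v],C_4)$ together with the componentwise inductive estimates yields $(m+1)/6$, i.e.\ on $|L(v)|+E_0(v)\ge 5+r(v)+t(v)$, and such a vertex need not exist. Concretely, let $G$ be obtained from a star with centre $c$ and leaves $y_1,\dots,y_{31}$ by attaching $7$ pendant copies of $C_4$ to $c$ and $2$ pendant copies of $C_4$ to each leaf, each copy joined by a single edge (so $m=376$ and, e.g., $\{c,y_1,\dots,y_{31}\}$ shows the theorem itself is comfortably true). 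At $v=c$: $|L(v)|=114$, $E_0(v)=14$, while the $62$ leaf-cycles become components isomorphic to $C_4$, so $r(v)=t(v)=62$ and $128<129$. At $v=y_i$: $|L(v)|=44$, $E_0(v)=4$, $r(v)=37$, $t(v)=7$, and $48<49$. At an attachment vertex of a cycle at $c$ (resp.\ at a leaf) the inequality fails by $6$ (resp.\ $1$), and at every remaining cycle vertex $|L(v)|+E_0(v)\le 5<6\le 5+r(v)+t(v)$. So the key inequality fails at \emph{every} vertex: pendant $C_4$'s hanging by one edge from a neighbour of $v$ each cost $2$ on the right while contributing only $1$ to $|L(v)|$, and non-extremal $C_4$-containing components (here a leaf with its two cycles, whose isolation number is $1$, not $(|E(H)|+1)/6=2$) waste exactly the slack you would need. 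Your case analysis (a)--(c) around one chosen cycle $C$ cannot repair this, because the obstruction lives far from $C$; moreover the claim in (c) that $G-N[u_i]$ is a singleton plus one connected graph is false in general.

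This is precisely why the paper does not rely on the estimate $\iota(G,C_4)\le 1+\iota(G-N[v],C_4)$ alone. It deletes larger sets $S$ --- a cycle, or $N[v]$, \emph{together with} the special components of the complement that are $C_4$, $K_4^-$ or members of $\mathcal{G}_4$ --- and pairs $S$ with a tailored $C_4$-isolating set $D_S$ of $G[S]$ built from the connection vertices of the $\mathcal{G}_4$-components (in Subcase 2.1(ii) even discarding $v_1$ itself in favour of vertices inside those components), gluing via its Lemma~\ref{lem2.1}. Some absorption mechanism of this kind for pendant $C_4$'s and for extremal components attached by a single edge is indispensable; without it your induction does not close, and the same machinery is what makes the equality characterization (which you only sketch) traceable. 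The overall skeleton you propose --- induction on $m$, trivial $C_4$-free case, small cases $K_4^-$ and $C_4^+$, and tracing equality back through the induction --- does match the paper, but the core reduction as stated has a genuine gap.
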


Combining the $\frac{m+1}{5}$'s bound in Theorem \ref{th1.3} and the result of this paper, we would like to propose the following conjecture.

\begin{conjecture}\label{conj1.5}
	If $G \ncong C_k$ is a connected graph of size $m$, then $$\iota(G,C_k) \leq \frac{m+1}{k+2}.$$
\end{conjecture}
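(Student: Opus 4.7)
The plan is to tackle Conjecture \ref{conj1.5} by generalising the inductive approach that succeeds for $k \in \{3,4\}$ in Theorems \ref{th1.3} and \ref{th1.4}. I would induct strongly on the size $m$. The base case is trivial: if $G$ contains no $C_k$ subgraph, then $\iota(G,C_k)=0$ and the bound is automatic, so we may assume $G \ncong C_k$ contains a copy $C^*$ of $C_k$.

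For the inductive step the goal is to produce a vertex $v \in V(G)$ to add to the isolating set, after which induction is applied to the connected components $G_1,\ldots,G_t$ of $G-N[v]$. Writing $m_i=|E(G_i)|$ and $m'=\sum_{i} m_i$, we would obtain
\[
\iota(G,C_k)\;\leq\;1+\sum_{i=1}^{t}\iota(G_i,C_k),
\]
and the inductive bound $\iota(G_i,C_k)\leq (m_i+1)/(k+2)$ applies to every $G_i$ with $G_i \ncong C_k$, while a component $G_i \cong C_k$ costs an extra $1/(k+2)$. After bookkeeping, the inductive step reduces to requiring that the number $m-m'$ of edges destroyed by removing $N[v]$ exceeds $k+1$, plus a correction term counting those components of $G-N[v]$ which contain a $C_k$ or are isomorphic to $C_k$; components without a $C_k$ actually yield savings, because $\iota=0$ there is much better than the generic bound $(m_i+1)/(k+2)$.

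To find such a $v$, I would use an extremal choice of $C^*$: fix a BFS tree from an arbitrary root and take $C^*$ to be a copy of $C_k$ whose vertices have maximum depth, so that the rest of $G$ attaches to $C^*$ through a limited structure. For $k \in \{3,4\}$ one can select $v \in V(C^*)$ with $N[v]\supseteq V(C^*)$ and thereby destroy all $k$ cycle edges in one step. For $k \geq 5$ this is no longer possible: when $\deg(v)=2$, only the four cycle edges incident with the three consecutive cycle vertices of $N[v]$ are destroyed, and the remaining $k-4$ edges of $C^*$ survive. The missing contribution to the target $k+2$ must therefore come from outside $C^*$ --- pendants, chords, or attachments to other blocks. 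The extremal family $\mathcal{G}_k=\{\mathrm{cons}(T,C_k):T\text{ a tree}\}$ already indicates exactly how the accounting is supposed to balance: the attaching edge from $C^*$ to the tree vertex $w_1$ and a tree edge incident to $w_1$ together supply the deficit, making the bound tight.

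The main obstacle I anticipate is twofold. First, for large $k$ the target $k+2$ cannot be met by cycle edges alone, so one must argue that the external structure supplies the deficit in \emph{every} configuration; pathologically sparse instances and sporadic small graphs (generalising $K_4^-$ for $k=4$ and the nine exceptional graphs of Theorem \ref{th1.2}) are likely to require separate treatment. Second, one must rule out, or pay for, components of $G-N[v]$ isomorphic to $C_k$, which can arise when several cycles are linked only through $N[v]$. To cope with these, I would aim for a strengthened inductive statement --- for example, bounding $\iota(G,C_k)$ in terms of $m-c(G)$ for a correction term $c(G)$ penalising pendant $C_k^+$-blocks, or using a weighted edge count that assigns larger weight to edges lying on $C_k$-subgraphs. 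Identifying the correct strengthening while simultaneously pinning down the extremal graphs uniformly in $k$ is, I believe, the central difficulty, and is presumably the reason the statement appears here only as a conjecture rather than a theorem.
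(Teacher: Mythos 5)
The statement you are addressing is Conjecture \ref{conj1.5}, which the paper does not prove: it establishes only the cases $k=3$ (quoted as Theorem \ref{th1.3}) and $k=4$ (Theorem \ref{th1.4}), and explicitly leaves the general case open. Your submission is likewise not a proof but a research plan, and you candidly say so in your final sentence. The genuine gap is exactly the one you identify and then leave unresolved: the entire inductive engine of the $k=4$ argument rests on finding a single vertex $v$ (or a small set $D_S$ together with a set $S$ as in Lemma \ref{lem2.1}) whose removal, with its closed neighbourhood, accounts for at least $k+2$ edges per vertex of the isolating set. For $k\ge 5$ a vertex of a $C_k$ no longer dominates the whole cycle, so the cycle edges alone cannot pay the bill, and your proposal to extract the deficit from ``pendants, chords, or attachments to other blocks'' is precisely the assertion that needs proof; no choice of $C^*$ via BFS depth, and no specific strengthened induction hypothesis or weighted edge count, is actually exhibited or verified. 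Until one of those is pinned down and shown to survive every configuration (including the analogues of the sporadic graphs $K_4^-$ and the $C_k$-components of $G-N[v]$, which the paper's $k=4$ proof handles by a lengthy case analysis in Subcases 1.1--1.2.4 and 2.1--2.2), there is no inductive step and hence no proof.

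That said, your reading of the structure of the existing proofs is essentially accurate, and worth recording as a comparison. The paper's proof of Theorem \ref{th1.4} does induct on $m$, but it does not select $C^*$ by depth in a BFS tree; instead it splits on $\Delta(G)$. When $\Delta=3$ it classifies $G[V(C)]$ for a fixed $4$-cycle $C$ and the number of edges leaving $V(C)$; when $\Delta\ge 4$ it removes $N[v]$ for a maximum-degree vertex $v$ together with all ``special'' components ($C_4$, $K_4^-$, and members of $\mathcal{G}_4$) of $G-N[v]$, using Lemma \ref{lem2.1} to control the at most one edge re-entering each leftover component. Your proposed correction term for components isomorphic to $C_k$ corresponds to the paper's separate bookkeeping of $\mathcal{H}_1$, $\mathcal{H}_2$, $\mathcal{H}_3$ versus $\mathcal{H}'$ in Case 2. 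If you want to make progress on the conjecture, the concrete next step is to formulate and prove the $k\ge 5$ analogue of inequality (1)/(3): a lower bound of the form $|E(G[S])|+e(S,V\setminus S)\ge (k+2)|D_S|-1$ for an explicitly constructed pair $(S,D_S)$, which is where your sketch currently stops.
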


Take a graph $G \in \mathcal{G}_k$. Then, $G={\rm cons}(T,C_k)$ for some tree $T$. For convenience, we call a vertex of $T$ a {\it connection vertex} of $G$, and an (induced) subgraph $C_k^+$ of $G$ a {\it $C_k^+$-constituent} of $G$. Clearly, each connection vertex of $G$ corresponds to a $C_k^+$-constituent of $G$, and $V(T)$ is the set of all connection vertices of $G$.


\begin{proposition}\label{prop1.6}
	If $G \in \mathcal{G}_k$ is a graph of size $m$, then $\iota(G,C_k)=\frac{m+1}{k+2}$.
\end{proposition}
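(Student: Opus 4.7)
The plan is to reduce the statement to counting, starting with the observation that if $T$ has $t:=|V(T)|$ vertices then
$$m=(t-1)+tk+t=t(k+2)-1,$$
since $T$ contributes $t-1$ edges, the $t$ disjoint copies of $C_k$ contribute $tk$ edges, and the $t$ edges joining $T$ to the copies of $C_k$ contribute $t$. Hence $\frac{m+1}{k+2}=t$, and it suffices to prove $\iota(G,C_k)=t$.

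For the upper bound, I would take $D=V(T)$, i.e., the set of all connection vertices. Each connection vertex $v$ is joined to exactly one vertex of the $C_k^+$-constituent corresponding to $v$, and that vertex lies on the $C_k$ portion of that constituent. Hence in $G-N[D]$, every copy of $C_k$ in the construction loses its attachment vertex, so what remains of that copy is a path on $k-1$ vertices, which contains no $C_k$. Since (as I argue below) the only copies of $C_k$ in $G$ are the $t$ constituent copies, $D$ is a $C_k$-isolating set, giving $\iota(G,C_k)\leq t$.

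For the lower bound, I first note that any cycle of $G$ must lie entirely inside one $C_k^+$-constituent, because $T$ is a tree and the edges joining $T$ to the copies of $C_k$ are bridges; therefore $G$ contains exactly $t$ subgraphs isomorphic to $C_k$, namely the $t$ constituent cycles. Now let $D$ be any $C_k$-isolating set. For each constituent cycle $C$, if $V(C)\cap N[D]=\emptyset$, then $C$ survives in $G-N[D]$, contradicting the isolating property; hence some $v\in D$ satisfies $N[v]\cap V(C)\neq\emptyset$. The key claim is that, for every vertex $v\in V(G)$, the set $N[v]$ meets at most one constituent cycle: if $v$ is a connection vertex, then its neighbours are other connection vertices (off all cycles) together with the single attachment vertex of its own constituent; if $v$ is the attachment vertex of some constituent cycle $C$, then $N[v]\subseteq V(C)\cup V(T)$; and if $v$ lies on a constituent cycle $C$ and is not its attachment vertex, then $N[v]\subseteq V(C)$. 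In every case $N[v]$ meets exactly one $C_k$. A covering argument then gives $|D|\geq t$, so $\iota(G,C_k)\geq t$.

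Neither direction presents a real obstacle; the only delicate step is the last one, verifying case-by-case that the closed neighbourhood of a single vertex hits only one constituent $C_k$, which relies crucially on the ``joined to a vertex not on a $C_k$'' stipulation in the definition of $\mathcal{G}_k$. Combining the two bounds yields $\iota(G,C_k)=t=\frac{m+1}{k+2}$, as required.
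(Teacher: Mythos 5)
Your proposal is correct and follows essentially the same route as the paper: count $m=(k+2)|V(T)|-1$, take $D=V(T)$ for the upper bound, and for the lower bound observe that any isolating set must contribute at least one vertex per $C_k^+$-constituent (your closed-neighbourhood claim is just a more explicit form of the paper's assertion that $|D\cap V(C_k^+)|\geq 1$ for each constituent). You merely spell out details the paper leaves as ``easy to see''.
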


\begin{proof}
	Suppose that $G={\rm cons}(T,C_4)$ for some tree $T$. Then, $m=6|V(T)|-1$. It is clear that $V(T)$ is a $C_k$-isolating set of $G$, and $\iota(G,C_4) \leq |V(T)|=\frac{m+1}{6}$. For another, let $D$ be a $C_k$-isolating set of $G$. It is easy to see that $|D \cap V(C_4^+)| \geq 1$ for each $C_k^+$-constituent of $G$, and $\iota(G,C_4) \geq |V(T)| = \frac{m+1}{6}$.
\end{proof}

By Proposition \ref{prop1.6}, we know that if Conjecture \ref{conj1.5} is true, then the upper bound is sharp. The extremal graph ${\rm cons}(K_{1,3}^+,C_4)$ is shown in Fig. \hyperlink{Fig1}{1} for $k=4$, where $K_{1,3}^+$ is the tree obtained from a $K_{1,3}$ by adding a pendant edge to a leaf of it. One can check that $\iota({\rm cons}(K_{1,3}^+,C_4),C_4) = 5 =\frac{29+1}{6}=\frac{|E({\rm cons}(K_{1,3}^+,C_4))|+1}{6}$.

\begin{center}
	\scalebox{1}[1]{\includegraphics{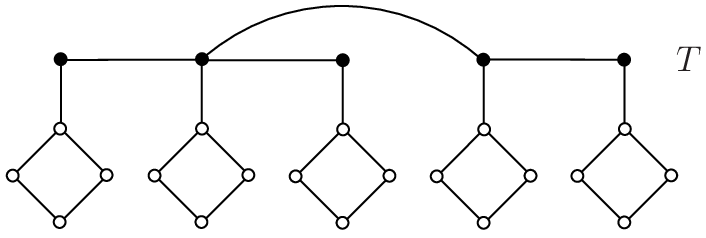}}
	\par \centerline{{\bf Fig. 1.} ~The graph $G={\rm cons}(K_{1,3}^+,C_4)$ with $m=29$ and $\iota(G,C_4)=5=\frac{m+1}{6}$.\hypertarget{Fig1}}
\end{center}

\section {\large Proof of Theorem \ref{th1.4}}

In this section, we prove Theorems \ref{th1.4}. For a graph $G=(V,E)$, let $E(V_1,V_2)=\{v_1v_2 \in E(G): v_1 \in V_1 \text{~and~} v_2 \in V_2\}$ and $e(V_1,V_2)=|E(V_1,V_2)|$, where $V_1,V_2 \subseteq V(G)$ and $V_1 \cap V_2 =\emptyset$. There are two lemmas used  frequently in the proof.

\begin{lemma}\label{lem2.1}
	Let $G=(V,E)$ be a graph and $S \subseteq V(G)$ a subset. If $G[S]$ has a $C_4$-isolating set $D$ such that $e(V(H),V \setminus S) \leq 1$ for each component $H$ of $G[S]-N[D]$, then $\iota(G,C_4) \leq |D|+\iota(G-S,C_4)$.
\end{lemma}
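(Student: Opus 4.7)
The plan is to take a minimum $C_4$-isolating set $D'$ of $G - S$, so that $|D'| = \iota(G - S, C_4)$, and prove that $D^{*} := D \cup D'$ is a $C_4$-isolating set of $G$. Since $D \subseteq S$ and $D' \subseteq V \setminus S$ are disjoint, this will immediately yield $\iota(G, C_4) \leq |D^{*}| = |D| + \iota(G - S, C_4)$.

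For the isolation property, I would assume for contradiction that $G - N_G[D^{*}]$ contains a $C_4$, say $F = v_1 v_2 v_3 v_4 v_1$, and partition $V(F)$ into $A = V(F) \cap S$ and $B = V(F) \setminus S$. If $A = V(F)$, then $F$ is a $C_4$ of $G[S]$ disjoint from $N_{G[S]}[D] = N_G[D] \cap S$, contradicting the choice of $D$; symmetrically, $B = V(F)$ contradicts the choice of $D'$. Hence both $A$ and $B$ are nonempty. Each $v \in A$ lies in $S \setminus N_G[D]$, so it sits in some component $H$ of $G[S] - N[D]$; moreover, any two $A$-vertices that are consecutive on $F$ share such a component, since the connecting edge of $F$ lies inside $G[S] - N[D]$.

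It then suffices to run a short case analysis on the cyclic arrangement of $A$ and $B$ around $F$. The possibilities, up to rotation, are $|A| = 1$, $|A| = 3$, $|A| = 2$ with the two $S$-vertices consecutive on $F$, and $|A| = 2$ with them opposite. In each case I want to exhibit a component $H$ of $G[S] - N[D]$ meeting $A$ from which $F$ sends at least two edges into $V \setminus S$: a lone $S$-vertex has both $F$-neighbors in $V \setminus S$; three $S$-vertices form a path in $G[S]$ (hence lie in a single component) and contribute the two $F$-edges incident to the sole $B$-vertex; two consecutive $S$-vertices share a component and each sends one $F$-edge into $V \setminus S$; and for two opposite $S$-vertices, either one alone already sends two $F$-edges into $V \setminus S$. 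In every case this contradicts $e(V(H), V \setminus S) \leq 1$.

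The argument is essentially mechanical, and the only real point of care is the neighborhood bookkeeping: the identities $N_{G[S]}[D] = N_G[D] \cap S$ (which uses $D \subseteq S$) and $N_{G-S}[D'] = N_G[D'] \setminus S$ let me translate the avoidance of $N_G[D^{*}]$ in $G$ into avoidance of the appropriate closed neighborhoods inside $G[S]$ and $G - S$, so that the pure cases $A = V(F)$ and $B = V(F)$ really do contradict the isolating property of $D$ and $D'$, respectively. The main (still modest) obstacle is simply the exhaustive check of the four mixed configurations.
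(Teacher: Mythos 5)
Your proposal is correct and follows essentially the same route as the paper: take a minimum $C_4$-isolating set $D'$ of $G-S$ and show $D\cup D'$ isolates $G$, the key point being that a surviving $C_4$ meeting both $S$ and $V\setminus S$ would force some component $H$ of $G[S]-N[D]$ to send at least two edges into $V\setminus S$, contradicting $e(V(H),V\setminus S)\leq 1$. Your explicit case analysis merely spells out the step the paper dismisses as easy (that two $C_4$-free pieces joined by at most one edge contain no $C_4$).
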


\begin{proof}
	Let $D'$ be a smallest $C_4$-isolating set of $G-S$. Then $|D'|=\iota(G-S,C_4)$. We shall prove that $D \cup D'$ is a $C_4$-isolating set of $G$, which implies that $\iota(G,C_4) \leq |D \cup D'|=|D|+|D'|=|D| + \iota(G-S,C_4)$. Clearly, each component $H$ of $G[S]-N[S]$ contains no $C_4$, and each component $H'$ of $(G-S)-N[D']$ contains no $C_4$. Since $e(V(H),V\setminus S) \leq 1$, $e(V(H),V(H')) \leq 1$. It is easy to see that the graph $H+H'$ must contain no $C_4$, where $V(H+H')=V(H) \cup V(H')$ and $E(H+H')=E(H) \cup E(H') \cup E(V(H),V(H'))$. Hence, $G-D \cup D'$ consists of some components, each of which contains no $C_4$ as a subgraph. The result is proved.
\end{proof}

\begin{lemma}(Zhang and Wu \cite{Zhang2021}).\label{lem2.2}
	If $G_1,G_2,\cdots,G_s$ are the distinct components of a graph $G$, then $\iota(G,C_4)=\sum_{i=1}^s\iota(G_i,C_4)$.
\end{lemma}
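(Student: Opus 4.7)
The plan is to prove the identity by establishing the two inequalities $\iota(G,C_4) \le \sum_{i=1}^s \iota(G_i,C_4)$ and $\iota(G,C_4) \ge \sum_{i=1}^s \iota(G_i,C_4)$ separately, using the fact that distinct components are pairwise vertex-disjoint and that any $C_4$-subgraph of $G$, being connected, must lie entirely within a single component.

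For the upper bound, I would pick, for each $i \in [s]$, a smallest $C_4$-isolating set $D_i$ of $G_i$, so that $|D_i|=\iota(G_i,C_4)$, and then consider $D=\bigcup_{i=1}^s D_i$. Because the $V(G_i)$ are pairwise disjoint and there are no edges between them, $N_G[D]\cap V(G_i)=N_{G_i}[D_i]$, and hence $G-N_G[D]$ is the disjoint union of the graphs $G_i-N_{G_i}[D_i]$. Since each of these contains no $C_4$, neither does $G-N_G[D]$, so $D$ is a $C_4$-isolating set of $G$ and $\iota(G,C_4)\le |D|=\sum_{i=1}^s \iota(G_i,C_4)$.

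For the lower bound, let $D$ be any smallest $C_4$-isolating set of $G$ and set $D_i=D\cap V(G_i)$. Again using that components are vertex-disjoint with no edges between them, $N_{G_i}[D_i]=N_G[D]\cap V(G_i)$, so $G_i-N_{G_i}[D_i]$ is exactly the union of those components of $G-N_G[D]$ that lie inside $G_i$. Since any subgraph isomorphic to $C_4$ is connected and therefore contained in a single component of $G$, and since $G-N_G[D]$ contains no $C_4$, we conclude $G_i-N_{G_i}[D_i]$ contains no $C_4$ either. Thus $D_i$ is a $C_4$-isolating set of $G_i$, giving $|D_i|\ge \iota(G_i,C_4)$, and summing over $i$ yields $\iota(G,C_4)=|D|=\sum_{i=1}^s |D_i|\ge \sum_{i=1}^s \iota(G_i,C_4)$.

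There is no real obstacle here; the only subtlety to state cleanly is the interaction between the closed neighbourhood operator and the decomposition into components, namely $N_G[D]\cap V(G_i)=N_{G_i}[D\cap V(G_i)]$, which is immediate from the absence of edges between different components. Combining the two inequalities gives the claimed equality.
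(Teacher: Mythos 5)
Your proof is correct: the two-inequality argument, resting on the observations that $N_G[D]\cap V(G_i)=N_{G_i}[D\cap V(G_i)]$ for vertex-disjoint components and that any $C_4$, being connected, lies in a single component, is the standard and complete justification of this lemma. Note that the paper itself gives no proof here --- it cites the result from Zhang and Wu --- so there is nothing in the present text to compare against, but your argument is exactly the expected one and has no gaps.
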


\noindent\textbf{Proof of Theorem \ref{th1.4}.} The sufficiency condition of the equality holds by Proposition \ref{prop1.6}. In what follows, we shall prove the bound and necessity in the theorem.

Let $G=(V,E)$ be a connected graph of order $n$ and size $m$. Suppose $G \ncong C_4$. Apply induction on $m$. If $G$ has no $C_4$, then $\iota(G,C_4)=0 < \frac{m+1}{6}$. Assume that $G$ has a $C_4$. Then $m \geq 4$. If $m \leq 5$, then $G \in \{K_4^-,C_4^+\}$. Clearly, $\iota(K_4^-,C_4)=\iota(C_4^+,C_4)=1$ and $\iota(G,C_4)=1=\frac{5+1}{6}=\frac{m+1}{6}$. Note that $C_4^+ \in \mathcal{G}_4$. The result holds. Assume that $m \geq 6$. If $\Delta=\Delta(G) \leq 2$, then $G \in \{P_n,C_n\}$ for $n \geq 6$. Clearly, $G$ has no $C_4$ now. We may assume that $\Delta \geq 3$.

\vspace{3mm}
\noindent{\bf Case 1:} $\Delta=3$.
\vspace{3mm}

Denote by the cycle $C=C_4:=u_1u_2u_3u_4u_1$. If $e(V(C),V\setminus V(C))=0$, then $G = G[V(C)]$. Since $m \geq 6$, $G \cong K_4$. Clearly, $\iota(G,C_4)=\iota(K_4,C_4)=1<\frac{6+1}{6}=\frac{m+1}{6}$. Assume that $e(V(C),V\setminus V(C)) \geq 1$. Due to $\Delta=3$, we know $G[V(C)] \ncong K_4$ and $G[V(C)] \in \{C_4,K_4^-\}$. Furthermore, $e(V(C),V\setminus V(C)) \leq 2$ if $G[V(C)] \cong K_4^-$, and $e(V(C),V\setminus V(C)) \leq 4$ if $G[V(C)] \cong C_4$.

\vspace{3mm}
\noindent{\bf Subcase 1.1:} $G[V(C)] \cong K_4^-$.
\vspace{3mm}

By the present assumption, we may assume that $d_{G[V(C)]}(u_i)=2$ for each $i \in \{1,3\}$ and $d_{G[V(C)]}(u_i)=d(u_i)=3$ for each $i \in \{2,4\}$. Let $G'=G-V(C)$. Denote $e=e(V(C),V(G'))=e(V(C),V\setminus V(C))$ simply. Hence, $1 \leq e \leq 2$. It is clear that $G' \ncong (\emptyset,\emptyset)$ is a graph with at most two components, and we denote them by $G_1'$ and $G_2'$ if there are two components of $G'$.

(i) We first consider that case that there is a component of $G'$ isomorphic to $C_4$. If $G' \cong C_4$, then $m=|E(G[V(C)])|+e(V(C),V(G'))+|E(G')|=|E(K_4^-)|+e+|E(C_4)|=5+e+4=9+e$. By $1 \leq e \leq 2$, $10 \leq m \leq 11$. Without loss of generality, we let $e(\{u_1\},V(G'))=1$. It is easy to check that $\{u_1\}$ is a $C_4$-isolating set of $G$, and thus, $\iota(G,C_4) \leq |\{u_1\}|=1 <\frac{10+1}{6} \leq \frac{m+1}{6}$.

If $G_i' \cong C_4$ for each $i \in [2]$, then $m=|E(G[V(C)])|+e(V(C),V(G'))+|E(G')|=|E(K_4^-)|+e+2|E(C_4)|=5+2+8=15$. It is easy to see that $e(\{u_1\},V(G'))=e(\{u_3\},V(G'))=1$, and $\{u_1,u_3\}$ is a $C_4$-isolating set of $G$. Thus, $\iota(G,C_4) \leq |\{u_1,u_3\}|=2<\frac{15+1}{6}=\frac{m+1}{6}$.

If there is exactly one $C_4$-component of two components of $G'$, then we may assume that $G_1' \cong C_4$ and $G_2' \ncong C_4$. Clearly, $G_2'=G-V(C)\cup V(G_1')$ and $|E(G_2')|=m-|E(G[V(C)])|-e(V(C),V(G'))-|E(G_1')|=m-5-2-4=m-11<m$. By the induction hypothesis, $\iota(G_2',C_4) \leq \frac{|E(G_2')|+1}{6}=\frac{m-11+1}{6}=\frac{m-10}{6}$. Let $e(\{u_1\},V(G_1'))=1$. Then, $\{u_1\}$ is a $C_4$-isolating set of $G[V(C)\cup V(G_1')]$. Note that $G[V(C)\cup V(G_1')]=G-V(G_2')$ and $e(V(C) \cup V(G_1'),V(G_2'))=e(\{u_3\},V(G_2'))=1$. By Lemma \ref{lem2.1}, we have $\iota(G,C_4) \leq |\{u_1\}|+\iota(G_2',C_4)\leq 1+\frac{m-10}{6}=\frac{m-4}{6}<\frac{m+1}{6}$.

(ii) It remains to consider the case that there is no component of $G'$ isomorphic to $C_4$. Let $c$ be the number of components of $G'$. Then, $1 \leq c \leq 2$ and $c \leq e$. Note that $|E(G')|=m-|E(G[V(C)])|-e(V(C),V(G')) = m-5-e<m$. By Lemma \ref{lem2.2} and the induction hypothesis, $\iota(G',C_4) \leq \frac{|E(G')|+c}{6}= \frac{m-5-e+c}{6}$, with equality if and only if each component of $G'$ is a member of $\{K_4^-\} \cup \mathcal{G}_4$. It is observed that $\{u_2\}$ is a $C_4$-isolating set of $G[V(C)]$, and $G[V(C)]-N[\{u_2\}] \cong (\emptyset,\emptyset)$. By Lemma \ref{lem2.1}, we have $\iota(G,C_4) \leq |\{u_2\}| +\iota(G',C_4) \leq 1+\frac{m-5-e+c}{6}=\frac{m+1}{6}+\frac{c-e}{6} \leq \frac{m+1}{6}$, and the equality holds if and only if $\iota(G,C_4)=1+\iota(G',C_4)$ and $c=e$, and $G' \in \{K_4^-\} \cup \mathcal{G}_4$ or $G'_i \in \{K_4^-\} \cup \mathcal{G}_4$ for each $i \in [2]$.

If $G' \in \{K_4^-\} \cup \mathcal{G}_4$ is connected, then $c=e=1$. Let $\{u_1v_1\} = E(V(C),V(G')) \subset E(G)$ for some $v_1 \in V(G')$. Assume that $G' \cong K_4^-$. Then, $m=|E(G[V(C)])|+e+|E(G')|=11$. It is easy to see that $\{v_1\}$ is a $C_4$-isolating set of $G$, and thus, $\iota(G,C_4) \leq |\{v_1\}|=1<\frac{11+1}{6}=\frac{m+1}{6}$. Assume that $G' \in \mathcal{G}_4$. Recall that $\mathcal{G}_4=\{{\rm cons}(T,C_4): T \text{~is a tree}\}$. Let $G'={\rm cons}(T',C_4)$ for some tree $T'$, and $v_1'$ be the connection vertex of the $C_4^+$-constituent of $G'$ containing the vertex $v_1$. It is easy to see that $m=|E(G[V(C)])|+e+|E(G')|=6+6|V(T')|-1=6|V(T')|+5$, and $\{v_1\}\cup V(T')\setminus \{v_1'\}$ is a $C_4$-isolating set of $G$. Thus, we have $\iota(G,C_4) \leq |\{v_1\}\cup V(T')\setminus \{v_1'\}|=|V(T')|<|V(T')|+1=\frac{6|V(T')|+5+1}{6}=\frac{m+1}{6}$.

If $G'$ is disconnected, then $c=e=2$ and $G_i' \in \{K_4^-\} \cup \mathcal{G}_4$ for each $i \in [2]$. Let $u_1v_1,u_3v_3 \in E(V(C),V(G')) \subset E(G)$ for some $v_1 \in V(G_1')$ and $v_3 \in V(G_2')$. By the symmetry of $G_1'$ and $G_2'$, there are three cases to consider: $G_i' \cong K_4^-$ for each $i$, $G_1' \cong K_4^-$ and $G_2' \in \mathcal{G}_4$, or $G_i' \in \mathcal{G}_4$ for each $i$. For the first case, we see that $m=5+2+5+5=17$, and $\{v_1,v_3\}$ is a $C_4$-isolating set of $G$. Thus, we have $\iota(G,C_4)\leq |\{v_1,v_3\}|=2<\frac{17+1}{6}=\frac{m+1}{6}$. For the second case, let $G_2'={\rm cons}(T_2',C_4)$ for some tree $T_2'$. We see that $m=5+2+5+6|V(T_2')|-1=6|V(T_2')|+11$, and $\{v_1\}\cup V(T_2')$ is a $C_4$-isolating set of $G$. Thus, we have $\iota(G,C_4) \leq |\{v_1\}\cup V(T_2')|=1+|V(T_2')|<|V(T_2')|+2=\frac{6|V(T_2')|+11+1}{6}=\frac{m+1}{6}$. For the third case, let $G_i'={\rm cons}(T_i',C_4)$ for some tree $T_i'$, where $i \in [2]$. Let $v_1'$ be the connection vertex of the $C_4^+$-constituent of $G_1'$ containing the vertex $v_1$. We see that $m=5+2+6|V(T_1')|-1+6|V(T_2')|-1=6(|V(T_1')|+|V(T_2')|)+5$, and $D=\{v_1\}\cup V(T_1')\setminus \{v_1'\} \cup V(T_2')$ is a $C_4$-isolating set of $G$. Thus, we have $\iota(G,C_4) \leq |D|=|V(T_1')|+|V(T_2')|<|V(T_1')|+|V(T_2')|+1=\frac{6(|V(T_1')|+|V(T_2')|)+5+1}{6}=\frac{m+1}{6}$.

\vspace{3mm}
\noindent{\bf Subcase 1.2:} $G[V(C)] \cong C_4$.
\vspace{3mm}

By the present assumption, it is clear that $d_{G[V(C)]}(u_i)=2$ for each $i \in [4]$. Recall that $\Delta =3$. Denote $e=e(V(C),V\setminus V(C))$. Then, $1 \leq e \leq 4$.

\vspace{3mm}
\noindent{\bf Subcase 1.2.1:} $e(V(C),V\setminus V(C))=1$.
\vspace{3mm}

Let $\{u_1v\} = E(V(C),V\setminus V(C))\subset E(G)$ for some $v \in V\setminus V(C)$. Note that $G[\{v\}\cup V(C)] \cong C_4^+$. Let $G'=G-\{v\}\cup V(C)$. Denote $e' =e(\{v\}\cup V(C),V(G'))$. Since $m \geq 6$, $1 \leq e' \leq 2$. Hence, $G' \ncong (\emptyset,\emptyset)$ is a graph with at most two components, and we denote them by $G_1'$ and $G_2'$ if two components of $G'$ exist.

(i) Assume that there is a component of $G'$ isomorphic to $C_4$. If $G' \cong C_4$, then $m=|E(G[\{v\}\cup V(C)])|+e(\{v\}\cup V(C),V(G'))+|E(G')|=|E(C_4^+)|+e'+|E(C_4)|=5+e'+4=9+e'$. By $1 \leq e' \leq 2$, $10 \leq m \leq 11$.  It is easy to see that $\{v\}$ is a $C_4$-isolating set of $G$, and thus, $\iota(G,C_4) \leq |\{v\}|=1 <\frac{10+1}{6} \leq \frac{m+1}{6}$.

If $G_i' \cong C_4$ for each $i \in [2]$, then $m=|E(G[\{v\}\cup V(C)])|+e(\{v\}\cup V(C),V(G'))+|E(G')|=|E(C_4^+)|+e'+2|E(C_4)|=5+2+8=15$. It is easy to see that $\{v\}$ is a $C_4$-isolating set of $G$, and thus, $\iota(G,C_4) \leq |\{v\}|=1<\frac{15+1}{6}=\frac{m+1}{6}$.

If $G_1' \cong C_4$ and $G_2' \ncong C_4$ (by the symmetry of $G_1'$ and $G_2'$), then $G_2'=G-S$ with $S=\{v\}\cup V(C)\cup V(G_1')$, and $|E(G_2')|=m-|E(G[\{v\}\cup V(C)])|-e(\{v\}\cup V(C),V(G'))-|E(G_1')|=m-5-2-4=m-11<m$. By the induction hypothesis, $\iota(G_2',C_4) \leq \frac{|E(G_2')|+1}{6}=\frac{m-11+1}{6}=\frac{m-10}{6}$. It is easy to check that $\{v\}$ is a $C_4$-isolating set of $G[S]$, and $E(S\setminus N[v],V\setminus S)=\emptyset$. By Lemma \ref{lem2.1}, we have $\iota(G,C_4) \leq |\{v\}|+\iota(G_2',C_4)\leq 1+\frac{m-10}{6}=\frac{m-4}{6}<\frac{m+1}{6}$.

(ii) Assume that each component of $G'$ is not isomorphic to $C_4$. Let $c$ be the number of components of $G'$. Then, $1 \leq c \leq 2$ and $c \leq e'$. Note that $|E(G')|=m-|E(G[\{v\}\cup V(C)])|-e(\{v\}\cup V(C),V(G')) = m-5-e'<m$. By Lemma \ref{lem2.2} and the induction hypothesis, $\iota(G',C_4) \leq \frac{|E(G')|+c}{6}= \frac{m-5-e'+c}{6}$, with equality if and only if each component of $G'$ is a member of $\{K_4^-\} \cup \mathcal{G}_4$. It is observed that $\{u_1\}$ is a $C_4$-isolating set of $G[\{v\}\cup V(C)]$, and $e(\{u_3\},V(G'))=0$. By Lemma \ref{lem2.1}, we have $\iota(G,C_4) \leq |\{u_1\}| +\iota(G',C_4) \leq 1+\frac{m-5-e'+c}{6}=\frac{m+1}{6}+\frac{c-e'}{6} \leq \frac{m+1}{6}$, and the equality holds if and only if $\iota(G,C_4)=1+\iota(G',C_4)$ and $c=e'$, and $G' \in \{K_4^-\} \cup \mathcal{G}_4$ or $G'_i \in \{K_4^-\} \cup \mathcal{G}_4$ for each $i \in [2]$.

If $G' \cong K_4^-$ or $G_i' \cong K_4^-$ for some $i \in [2]$, then $G'$ or $G_i'$ is an induced subgraph of $G$ with $e(V(G'),V\setminus V(G'))=e'=c=1$ or $e(V(G_i'),V\setminus V(G_i'))=e(\{v\},V(G_i'))=1$, and these two cases are covered by Subcase 1.1. Thus, we are done. It follows that $G' \in \mathcal{G}_4$ or $G'_i \in \mathcal{G}_4$ for each $i \in [2]$, as well as $c=e'$.

If $G' \in \mathcal{G}_4$, then $c=e'=1$. Let $G'={\rm cons}(T',C_4)$ for some tree $T'$, and let $\{vv_1\} = E(\{v\},V(G'))$ for some $v_1 \in V(G')$. Then, $m=|E(G[\{v\}\cup V(C)])|+e'+|E(G')|=6+6|V(T')|-1=6|V(T')|+5$. If $v_1 \notin V(T')$ is not a connection vertex of $G'$, then let $v_1'$ be the connection vertex of the $C_4^+$-constituent of $G'$ containing $v_1$. It is easy to see that $\{v\}\cup V(T')\setminus \{v_1'\}$ is a $C_4$-isolating set of $G$, and thus, we have $\iota(G,C_4) \leq |\{v_1\}\cup V(T')\setminus \{v_1'\}|=|V(T')|<|V(T')|+1=\frac{6|V(T')|+5+1}{6}=\frac{m+1}{6}$. If $v_1 \in V(T')$ is a connection vertex of $G'$, then by the present structure of $G$ and the definition of $\mathcal{G}_4$, $G \in \mathcal{G}_4$, where $v$ is a connection vertex of $G$ and $G[\{v\}\cup V(C)]$ is the $C_4^+$-constituent of $G$ containing $v$. Furthermore, $G={\rm cons}(T,C_4)$, where $T$ is the tree with $V(T)=\{v\} \cup V(T')$ and $E(T)=\{vv_1\} \cup E(T')$.

If $G_i' \in \mathcal{G}_4$ for each $i \in [2]$, then $c=e'=2$. Let $G_i'={\rm cons}(T_i',C_4)$ for some tree $T_i'$, and let $\{vv_i\} = E(\{v\},V(G_i'))$ for some $v_i \in V(G_i')$. Then, $m=|E(G[\{v\}\cup V(C)])|+e'+|E(G_1')|+|E(G_2')|=7+6|V(T_1')|-1+6|V(T_2')|-1=6(|V(T_1')|+|V(T_2')|)+5$. Let $v_i'$ be the connection vertex of the $C_4^+$-constituent of $G_i'$ containing $v_i$. Note that $v_i=v_i'$ is possible for each $i \in [2]$. If there exists some $i \in [2]$ such that $v_i$ is not a connection vertex of $G_i'$, then it can be check that $D=\{v\}\cup V(T_1')\setminus \{v_1'\} \cup V(T_2')\setminus \{v_2'\}$ is a $C_4$-isolating set of $G$. Thus, we have $\iota(G,C_4) \leq |D| \leq |V(T_1')|+|V(T_2')|<|V(T_1')|+|V(T_2')|+1=\frac{6(|V(T_1')|+|V(T_2')|)+5+1}{6}=\frac{m+1}{6}$. If $v_i=v_i'$ for each $i \in [2]$, then by the present structure of $G$ and the definition of $\mathcal{G}_4$, $G \in \mathcal{G}_4$, where $v$ is a connection vertex of $G$ and $G[\{v\}\cup V(C)]$ is the $C_4^+$-constituent of $G$ containing $v$. Furthermore, $G={\rm cons}(T,C_4)$, where $T$ is the tree with $V(T)=\{v\} \cup V(T_1')\cup V(T_2')$ and $E(T)=\{vv_1,vv_2\} \cup E(T_1')\cup E(T_2')$.

\vspace{3mm}
\noindent{\bf Subcase 1.2.2:} $e(V(C),V\setminus V(C))=2$.
\vspace{3mm}

Recall that $G[V(C)] \cong C_4$. Let $G'=G-V(C)$. Then, $e=e(V(C),V(G'))=e(V(C),V\setminus V(C))=2$. Hence, $G' \ncong (\emptyset,\emptyset)$ is a graph with at most two components, and we denote them by $G_1'$ and $G_2'$ if two components of $G'$ exist. If there is a $C_4$-component of $G'$, then any case is covered by Subcase 1.2.1 except that $e=2$ and $G' \cong C_4$ is connected. The reason is that, $G'$ or $G_i'$ for some $i \in [2]$ is an induced $C_4$-subgraph of $G$ with $e(V(G'),V\setminus V(G'))=e=1$ or $e(V(G_i'),V\setminus V(G_i'))=e(V(G_i'),V(C))=1$. For the exceptional case, we know that $m=|E(G[V(C)])|+e(V(C),V(G'))+|E(G')|=2|E(C_4)|+e=10$. Without loss of generality, we let $u_1v_1 \in E(V(C),V(G'))$ for some $v_1 \in V(G')$. It is clear that $\{u_1\}$ is a $C_4$-isolating set of $G$, and thus, $\iota(G,C_4) \leq |\{u_1\}|=1<\frac{10+1}{6}=\frac{m+1}{6}$.

If there is a $K_4^-$-component of $G'$, then $G' \cong K_4^-$ is connected, or $G'$ is disconnected and $G_i' \cong K_4^-$ for some $i \in [2]$. Hence, $G'$ or $G_i'$ is an induced $K_4^-$-subgraph of $G$ with $e(V(G'),V\setminus V(G'))=e=2$ or $e(V(G_i'),V\setminus V(G_i'))=e(V(G_i'),V(C))=1$. These two cases are covered by Subcase 1.1, and we are done.

(i) Assume that there is a component of $G'$ isomorphic to a member of $\mathcal{G}_4$. If $G' \in \mathcal{G}_4$ is connected, then let $G'={\rm cons}(T',C_4)$ for some tree $T'$, and we may let $u_1v \in E(V(C),V(G'))$ for some $v \in V(G')$. Note that $m=|E(G[V(C)])|+e+E(G')=6+6|V(T')|-1=6|V(T')|+5$. Let $v'$ be the connection vertex of the $C_4^+$-constituent of $G'$ containing $v$. It is easy to check that $D=\{v\}\cup V(T')\setminus \{v'\}$ is a $C_4$-isolating set of $G$, and thus, $\iota(G,C_4) \leq |D|=|V(T')|<|V(T')|+1= \frac{6|V(T')|+5+1}{6}=\frac{m+1}{6}$.

If $G'$ is disconnected and $G_1' \in \mathcal{G}_4$ (by the symmetry of $G_1'$ and $G_2'$), then let $G_1'={\rm cons}(T_1',C_4)$ for some tree $T_1'$, and we may let $\{u_1v_1\} = E(V(C),V(G_1'))$ for some $v_1 \in V(G_1')$. Clearly, $G_2'=G-V(C)\cup V(G_1')$ and $|E(G_2')|=m-|E(G[V(C)])|-e-|E(G_1')|=m-6-(6|V(T_1')|-1)=m-6|V(T_1')|-5$. By the induction hypothesis, $\iota(G_2',C_4) \leq \frac{|E(G_2')|+1}{6}=\frac{m-6|V(T_1')|-4}{6}$. Let $v_1'$ be the connection vertex of the $C_4^+$-constituent of $G_1'$ containing $v_1$. It is easy to check that $\{v_1\}\cup V(T_1')\setminus \{v_1'\}$ is a $C_4$-isolating set of $G[V(C)\cup V(G_1')]$. Note that $e(V(C)\cup V(G_1'),V(G_2')) =e(V(C),V(G_2'))=1$. By Lemma \ref{lem2.1}, we have $\iota(G,C_4) \leq |\{v_1\}\cup V(T_1')\setminus \{v_1'\}|+\iota(G_2',C_4)\leq |V(T_1')|+\frac{m-6|V(T_1')|-4}{6}=\frac{m-4}{6}<\frac{m+1}{6}$.

(ii) Assume that each component of $G'$ is not a member of $\{C_4,K_4^-\}\cup \mathcal{G}_4$. Note that $G'$ may still be disconnected, and that $|E(G')|=m-|E(G[V(C)])|-e = m-6<m$. By Lemma \ref{lem2.2} and the induction hypothesis, $\iota(G',C_4) \leq \frac{|E(G')|}{6}= \frac{m-6}{6}$. Since $\Delta=3$ and $e=2$, there is a vertex $u_i \in V(C)$ for $i \in [4]$ such that $e(V(C)\setminus N[u_i],V(G'))=0$. Consequently, $\{u_i\}$ is a $C_4$-isolating set of $G[V(C)]$ satisfying the condition of Lemma \ref{lem2.1}, and we have $\iota(G,C_4) \leq |\{u_i\}| +\iota(G',C_4) \leq 1+\frac{m-6}{6}=\frac{m}{6}<\frac{m+1}{6}$.

\vspace{3mm}
\noindent{\bf Subcase 1.2.3:} $e(V(C),V\setminus V(C))=3$.
\vspace{3mm}

Let $G'=G-V(C)$. Then, $e=e(V(C),V(G'))=e(V(C),V\setminus V(C))=3$. Hence, $G' \ncong (\emptyset,\emptyset)$ is a graph with at most three components. If there is a $C_4$-component of $G'$, then any case is covered by Subcases 1.2.1 and 1.2.2 except that $e=3$ and $G' \cong C_4$ is connected, for the analogous reason in the first paragraph of Subcase 1.2.2. For the exceptional case, we know that $m=|E(G[V(C)])|+e(V(C),V(G'))+|E(G')|=2|E(C_4)|+e=11$. Without loss of generality, we let $u_1v_1 \in E(V(C),V(G'))$ for some $v_1 \in V(G')$. Since $\Delta=3$, $e(\{u_3\},V(G')) \leq 1$. It is easy to see that $G-N[\{u_1\}]$ contains no $C_4$ and $\{u_1\}$ is a $C_4$-isolating set of $G$, which implies that $\iota(G,C_4) \leq |\{u_1\}|=1<\frac{11+1}{6}=\frac{m+1}{6}$.

If there is a $K_4^-$-component of $G'$, then $G'$ is disconnected. Moreover, there is a $K_4^-$-component $G_i'$ of $G'$ for some $i \in [2]$ or $i \in [3]$. Hence, $G_i'$ is an induced $K_4^-$-subgraph of $G$ with $e(V(G_i'),V\setminus V(G_i'))=e(V(G_i'),V(C)) \leq 2$. This case is covered by Subcase 1.1, and we are done.

(i) Assume that there is a component of $G'$ isomorphic to a member of $\mathcal{G}_4$. If $G' \in \mathcal{G}_4$ is connected, then let $G'={\rm cons}(T',C_4)$ for some tree $T'$. Note that $m=|E(G[V(C)])|+e+E(G')=7+6|V(T')|-1=6|V(T')|+6$. It is easy to check that $D=\{u_1\}\cup V(T')$ is a $C_4$-isolating set of $G$, and $e(\{u_3\},V(G'-N[T'])) \leq 1$. Thus, we have $\iota(G,C_4) \leq |D|=1+|V(T')|=\frac{6|V(T')|+6}{6}< \frac{6|V(T')|+6+1}{6}=\frac{m+1}{6}$.

If $G'$ is disconnected and $G_1' \in \mathcal{G}_4$ is a component of $G'$ (by the symmetry among components of $G'$), then let $G_1'={\rm cons}(T_1',C_4)$ for some tree $T_1'$. Without loss of generality, we may let $\{u_1v_1\} \in E(V(C),V(G_1'))$ for some $v_1 \in V(G_1')$. Let $v_1'$ be the connection vertex of the $C_4^+$-constituent $(C')^+$ of $G_1'$ containing $v_1$. Clearly, $C' \cong C_4$. If $e(V(G_1'),V(C))=1$, then $C'$ of $(C')^+$ is an induced $C_4$-subgraph of $G$ with $e(V(C'),V\setminus V(C')) \leq e(\{v_1'\},V(C'))+e(V(C),V(G_1'))=1+1=2$. This case is covered by Subcases 1.2.1 and 1.2.2, and we are done. So, we let $e(V(G_1'),V(C)) = 2$. Since $e(V(C),V(G'))=3$ and $G'$ is disconnected, $G'$ consists of exactly two components. Denote by $G_2'$ the other component. Then, $G_2'=G-V(C)\cup V(G_1')$ and $e(V(G_2'),V(C))=1$. Moreover, $|E(G_2')|=m-|E(G[V(C)])|-e-|E(G_1')|=m-6|V(T_1')|-6<m$. By the induction hypothesis, $\iota(G_2',C_4) \leq \frac{|E(G_2')|+1}{6}=\frac{m-6|V(T_1')|-5}{6}$. It is easy to check that $\{v_1\}\cup V(T'_1) \setminus \{v_1'\}$ is a $C_4$-isolating set of $G[V(C)\cup V(G_1')]$. By Lemma \ref{lem2.1}, we have $\iota(G,C_4) \leq |\{v_1\}\cup V(T_1')\setminus \{v_1'\}|+\iota(G_2',C_4)\leq |V(T_1')|+\frac{m-6|V(T_1')|-5}{6}=\frac{m-5}{6}<\frac{m+1}{6}$.

(ii) Assume that each component of $G'$ is not a member of $\{C_4,K_4^-\}\cup \mathcal{G}_4$. Note that $|E(G')|=m-|E(G[V(C)])|-e = m-7<m$. By Lemma \ref{lem2.2} and the induction hypothesis, $\iota(G',C_4) \leq \frac{|E(G')|}{6}= \frac{m-7}{6}$. There is a vertex $u_i \in V(C)$ for $i \in [4]$ with $e(V(C)\setminus N[u_i],V(G'))=0$, and $\{u_i\}$ is a $C_4$-isolating set of $G[V(C)]$. By Lemma \ref{lem2.1}, we have $\iota(G,C_4) \leq |\{u_i\}| +\iota(G',C_4) \leq 1+\frac{m-7}{6}=\frac{m-1}{6}<\frac{m+1}{6}$.

\vspace{3mm}
\noindent{\bf Subcase 1.2.4:} $e(V(C),V\setminus V(C))=4$.
\vspace{3mm}

Let $G'=G-V(C)$. Then, $e=e(V(C),V(G'))=e(V(C),V\setminus V(C))=4$. Hence, $G' \ncong (\emptyset,\emptyset)$ is a graph with at most four components. In the similar manner of proving previous subcases, we shall know that each component of $G'$ is not a member of $\{C_4,K_4^-\}$ except the case that $e=4$ and $G' \cong C_4$ is connected. For the exceptional case, it is clear that $m=|E(G[V(C)])|+e(V(C),V(G'))+|E(G')|=2|E(C_4)|+e=12$. Without loss of generality, we let $u_1v_1 \in E(V(C),V(G'))$ for some $v_1 \in V(G')$. We can easily see that $\{u_1,v_1\}$ is a $C_4$-isolating set of $G$, which implies that $\iota(G,C_4) \leq |\{u_1,v_1\}|=2<\frac{12+1}{6}=\frac{m+1}{6}$.

(i) Assume that there is a component of $G'$ isomorphic to a member of $\mathcal{G}_4$. If $G' \in \mathcal{G}_4$ is connected, then the proof is similar to the proof in (i) of Subcase 1.2.3, except for a slight difference. That is, $\iota(G,C_4) \leq |D|=1+|V(T')|=\frac{6|V(T')|+7}{6}< \frac{6|V(T')|+7+1}{6}=\frac{m+1}{6}$, where $T'$ and $D$ is defined as in (i) of Subcase 1.2.3.

If $G'$ is disconnected and $G_1' \in \mathcal{G}_4$ is a component of $G'$ (by the symmetry among components of $G'$), then let $G_1'={\rm cons}(T_1',C_4)$ for some tree $T_1'$. Without loss of generality, we may let $\{u_1v_1\} \in E(V(C),V(G_1'))$ for some $v_1 \in V(G_1')$. Let $v_1'$ be the connection vertex of the $C_4^+$-constituent $(C')^+$ of $G_1'$ containing $v_1$. Clearly, $C' \cong C_4$. If $1 \leq e(V(G_1'),V(C))\leq 2$, then $C'$ of $(C')^+$ is an induced $C_4$-subgraph of $G$ with $e(V(C'),V\setminus V(C')) \leq e(\{v_1'\},V(C'))+e(V(C),V(G_1'))\leq 1+2=3$. This case is covered by Subcases from 1.2.1 to 1.2.3, and we are done. So, we let $e(V(G_1'),V(C)) = 3$. Since $e(V(C),V(G'))=4$ and $G'$ is disconnected, $G'$ consists of exactly two components. Denote by $G_2'$ the other component. Then, $G_2'=G-V(C)\cup V(G_1')$ and $e(V(G_2'),V(C))=1$. Moreover, $|E(G_2')|=m-|E(G[V(C)])|-e-|E(G_1')|=m-6|V(T_1')|-7<m$. By the induction hypothesis, $\iota(G_2',C_4) \leq \frac{|E(G_2')|+1}{6}=\frac{m-6|V(T_1')|-6}{6}$. It is easy to check that $\{u_1\}\cup V(T'_1)$ is a $C_4$-isolating set of $G[V(C)\cup V(G_1')]$. By Lemma \ref{lem2.1}, we have $\iota(G,C_4) \leq |\{u_1\}\cup V(T'_1)|+\iota(G_2',C_4)\leq 1+|V(T_1')|+\frac{m-6|V(T_1')|-6}{6}=\frac{m}{6}<\frac{m+1}{6}$.

(ii) Assume that each component of $G'$ is not a member of $\{C_4,K_4^-\}\cup \mathcal{G}_4$. Note that $|E(G')|=m-|E(G[V(C)])|-e = m-8<m$. By Lemma \ref{lem2.2} and the induction hypothesis, $\iota(G',C_4) \leq \frac{|E(G')|}{6}= \frac{m-8}{6}$. Since $\Delta=3$ and $e=4$, $d(u_i)=3$ for each $i \in [4]$. It is easy to see that, $e(V(C)\setminus N[u_i],V(G'))=1$, and $\{u_i\}$ is a $C_4$-isolating set of $G[V(C)]$. By Lemma \ref{lem2.1}, we have $\iota(G,C_4) \leq |\{u_i\}| +\iota(G',C_4) \leq 1+\frac{m-8}{6}=\frac{m-2}{6}<\frac{m+1}{6}$.

\vspace{3mm}
\noindent{\bf Case 2:} $\Delta \geq 4$.
\vspace{3mm}

Fix a vertex $v$ with $d(v)=\Delta$. Let $G'=G-N[v]$ and $e=e(N[v],V(G'))$. If $e=0$, then $G=G[N[v]]$. Clearly, $\{v\}$ is a $C_4$-isolating set of $G$. Recall that $m \geq 6$. Thus, $\iota(G,C_4) \leq |\{v\}|=1<\frac{6+1}{6}\leq \frac{m+1}{6}$. Assume that $e \geq 1$. Then, $|E(G')|=m-|E(G[N[v])|-e\leq m-\Delta-e \leq m-5<m$.

Let $\mathcal{H}$ be the set of components of $G'$, $\mathcal{H}_1$ the set of $C_4$-components of $G'$, $\mathcal{H}_2$ the set of $K_4^-$-components of $G'$, and $\mathcal{H}_3$ the set of components of $G'$ isomorphic to a member of $\mathcal{G}_4$. Let $\mathcal{H}_s= \bigcup_{i=1}^3 \mathcal{H}_i$ and $\mathcal{H}'=\mathcal{H}\setminus \mathcal{H}_s$. Set $c=|\mathcal{H}|$, $s=|\mathcal{H}_s|$, $c'=|\mathcal{H}'|$, and $c_i=|\mathcal{H}_i|$ for $i \in [3]$. Clearly, $s=c_1+c_2+c_3$ and $c=s+c'\geq 1$. By the induction hypothesis, $\iota(H,C_4) \leq \frac{|E(H)|+1}{6}$ for each special component $H \in \mathcal{H}_2\cup \mathcal{H}_3$, and $\iota(H,C_4) \leq \frac{|E(H)|}{6}$ for each non-special component $H \in \mathcal{H}'$.

If $\mathcal{H}_s =\emptyset$, then $\mathcal{H}=\mathcal{H}'$ and $\mathcal{H}' \neq \emptyset$. Note that $\{v\}$ is a $C_4$-isolating set of $G[N[v]]$. By Lemmas \ref{lem2.1} and \ref{lem2.2}, and the induction hypothesis, we have $\iota(G,C_4) \leq |\{v\}|+\iota(G',C_4)\leq 1+\frac{|E(G')|}{6} \leq 1+ \frac{m-\Delta-e}{6} \leq 1+ \frac{m-4-1}{6}=\frac{m+1}{6}$. The equality holds if and only if $\iota(G,C_4)=1+\iota(G',C_4)$, $\iota(G',C_4)=\frac{|E(G')|}{6}$, $|E(G')|=m-\Delta-e$, $\Delta=4$ and $e=1$. Hence, we know that $G[N[v]] \cong K_{1,4}$, and $G$ is the graph obtained from $G'$ and a $K_{1,4}$ by adding an edge which joins a leaf of the $K_{1,4}$ to a vertex of $G'$. Clearly, any $C_4$-isolating set of $G'$ is a $C_4$-isolating set of $G$, and we have $\iota(G,C_4) \leq \iota(G',C_4) = \frac{|E(G')|}{6}=\frac{m-5}{6}<\frac{m+1}{6}$.

It remains to consider the case of $\mathcal{H}_s \neq \emptyset$. We divide the following proof into two subcases in terms of the value of $e(V(H),N[v])$ where $H \in \mathcal{H}_s$.

\vspace{3mm}
\noindent{\bf Subcase 2.1:} For some $H \in \mathcal{H}_s$, $|N(H)| = 1$ or $1 \leq e(V(H),N[v])\leq 2$.
\vspace{3mm}

Suppose that there exists a component $H^* \in \mathcal{H}_s$ with $1 \leq e(V(H^*),N[v])\leq 2$. Without loss of generality, we may assume that $v_1 \in N(v)$ and $v_1 \in N(H^*)$. Let $\mathcal{H}_s^{v_1}$ be the set of components of $G'$ such that $N(H)=\{v_1\}$, or $v_1 \in N(H)$ and $1 \leq e(V(H),N[v])\leq 2$, where $H \in \mathcal{H}_s$. Clearly, $H^* \in \mathcal{H}_s^{v_1}$ and $\mathcal{H}_s^{v_1} \neq \emptyset$. Set $S=\{v_1\} \cup \bigcup_{H\in \mathcal{H}_{s}^{v_1}}V(H)$ and $G''=G-S$. Since $v \in V(G'')$, $G'' \ncong (\emptyset,\emptyset)$. Let $(\mathcal{H}')^{v_1}$ be the set of components of $G'$ with $N(H)=\{v_1\}$ for $H \in \mathcal{H}'$. As in Fig. \hyperlink{Fig2}{2}, $G''$ consists of components of $(\mathcal{H}')^{v_1}$ and the component $G_v$ containing $v$. Particularly, it is possible that $(\mathcal{H}')^{v_1} =\emptyset$ and $G''=G_v$. By $d(v)=\Delta \geq 4$, $d_{G_v}(v)\geq 3$. Hence, $G_v \ncong C_4$. By the induction hypothesis, $\iota(H,C_4) \leq \frac{|E(H)|}{6}$ for each $H \in (\mathcal{H}')^{v_1}$, and $\iota(G_v,C_4) \leq \frac{|E(G_v)|+1}{6}$ with equality if and only if $G_v \in \{K_4^-\} \cup \mathcal{G}_4$.

\begin{center}
	\scalebox{1}[1]{\includegraphics{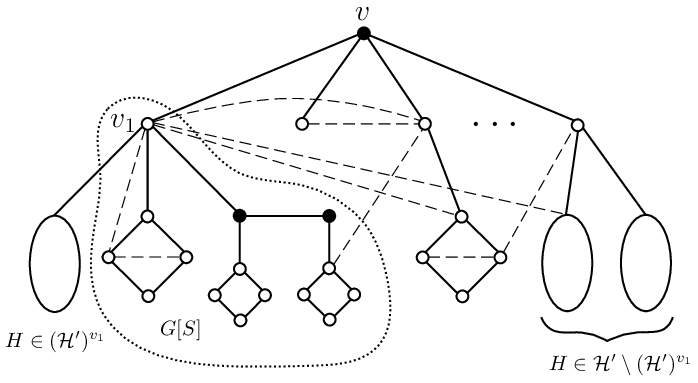}}
	\par \centerline{{\bf Fig. 2.} ~The set $S$ and graph $G''$ in Subcase 2.1.\hypertarget{Fig2}}
\end{center}

For each $i \in [3]$, let $\mathcal{H}_i^{v_1}=\mathcal{H}_i \cap \mathcal{H}_s^{v_1}$. Denote $c_i^{v_1}=|\mathcal{H}_{i}^{v_1}|$ and $(c')^{v_1}=|(\mathcal{H}')^{v_1}|$. We can see that $e(S,\bigcup_{H \in (\mathcal{H}')^{v_1}}V(H))=e(\{v_1\},\bigcup_{H \in (\mathcal{H}')^{v_1}}V(H)) \geq (c')^{v_1} \geq 0$ and $e(S,V(G_v)) = e(\{v_1\},V(G_v))+e(S \setminus \{v_1\}, N(v) \setminus \{v_1\}) \geq |\{vv_1\}|=1$. Assume that each component $H$ of $\mathcal{H}_3$ has $t(H)$ connection vertices. Then, $|E(H)|=6t(H)-1$ for each $H \in \mathcal{H}_3$. Moreover, since $\mathcal{H}_3^{v_1} \subseteq \mathcal{H}_3$, we obtain that $\sum_{H \in \mathcal{H}_3^{v_1}}|E(H)|=6t^{v_1}-c_3^{v_1}$, where $t^{v_1} = \sum_{H\in \mathcal{H}_3^{v_1}}t(H)$. Therefore, 
\begin{equation}
\begin{array}{lllll}
	|E(G'')|&=m-|E(G[S])|-e(S,V\setminus S)\\
		    &=m-\sum_{H \in \mathcal{H}_{s}^{v_1}}(|E(H)|+e(V(H),N[v]))\\
		    &~~~~~~-e(\{v_1\},\bigcup_{H \in (\mathcal{H}')^{v_1}}V(H))-e(\{v_1\},V(G_v))\\
		    &\leq m-5c_1^{v_1}-6c_2^{v_1}-(6t^{v_1}-c_3^{v_1})-c_3^{v_1}-(c')^{v_1}-|\{vv_1\}|\\
		    &=m-5c_1^{v_1}-6c_2^{v_1}-6t^{v_1}-(c')^{v_1}-1.
\end{array}
\end{equation}

For each $H \in \mathcal{H}_3^{v_1}$, let $D_H^{v_1}$ be the set of all connection vertices of $H$. Observe that $D_S=\{v_1\}\cup \bigcup_{H \in \mathcal{H}_3^{v_1}}D_H^{v_1}$ is a $C_4$-isolating set of $G[S]$, and each component of $G[S]-N[D_S]$ has at most one edge which joins a vertex of the component to a vertex of $N(v) \setminus \{v_1\}$. Note that $|E(G'')|=\sum_{H \in (\mathcal{H}')^{v_1}}|E(H)|+|E(G_v)|$ also. By (1), Lemmas \ref{lem2.1} and \ref{lem2.2}, and the induction hypothesis, we have
\begin{equation}
	\begin{array}{lllll}
		\iota(G,C_4)&\leq |D_S|+\iota(G'',C_4)\\
		&= |\{v_1\}\cup \bigcup_{H \in \mathcal{H}_3^{v_1}}D_H^{v_1}|+\sum_{H \in (\mathcal{H}')^{v_1}}\iota(H,C_4)+\iota(G_v,C_4)\\
		&\leq 1+t^{v_1}+\sum_{H \in (\mathcal{H}')^{v_1}}\frac{|E(H)|}{6}+\frac{|E(G_v)|+1}{6}\\
		&=1+t^{v_1}+\frac{|E(G'')|+1}{6}\\
		&\leq 1+t^{v_1}+\frac{m-5c_1^{v_1}-6c_2^{v_1}-6t^{v_1}-(c')^{v_1}-1+1}{6}\\
		&=\frac{m+1}{6}+\frac{5-5c_1^{v_1}-6c_2^{v_1}-(c')^{v_1}}{6}.
	\end{array}
\end{equation}
Assume that $5-5c_1^{v_1}-6c_2^{v_1}-(c')^{v_1} <0$. Then $\iota(G,C_4) < \frac{m+1}{6}$. So, it follows that $5-5c_1^{v_1}-6c_2^{v_1}-(c')^{v_1} \geq 0$, that is, $5c_1^{v_1}+6c_2^{v_1}+(c')^{v_1} \leq 5$. Further, $c_1^{v_1} \leq 1$.

(i) If $c_1^{v_1}=1$, then $c_2^{v_1}=(c')^{v_1}=0$. Hence, $G''=G_v$. Due to (2), $\iota(G,C_4) \leq \frac{m+1}{6}$. Note that the equality holds if and only if each equality throughout (1) and (2) holds. Consequently, we have $\iota(G'',C_4)=\iota(G_v,C_4)=\frac{|E(G_v)|+1}{6}$, and thus, $G'' \in \{K_4^-\}\cup \mathcal{G}_4$. Meanwhile, we have $|E(G'')|=m-5c_1^{v_1}-(6t^{v_1}-c_3^{v_1})-c_3^{v_1}-|\{vv_1\}|$, and thus, $e(\{v_1\},V(G''))=\{vv_1\}$ and $e(V(H),N[v])=e(\{v_1\},V(H))=1$ for each $H \in \mathcal{H}_s^{v_1}=\{H_1^{v_1}\}\cup \mathcal{H}_3^{v_1}$, where $\{H_1^{v_1}\}=\mathcal{H}_1^{v_1}$. Therefore, we can see that $G$ is the graph obtained from $H_1^{v_1}$, $G''$ and the components of $\mathcal{H}_3^{v_1}$ by adding the vertex $v_1$ to them and joining $v_1$ to a vertex of each one of them.

Recall that $H_1^{v_1} \cong C_4$ and $H \in \mathcal{G}_4$ for each $H \in \mathcal{H}_3^{v_1}$. Assume that $G'' \cong K_4^-$.  Clearly, $D_S=\{v_1\} \cup \bigcup_{H \in \mathcal{H}_3^{v_1}}D_H^{v_1}$ is a $C_4$-isolating set of $G$. Now $m=5+6+6t^{v_1}$, and we have $\iota(G,C_4) \leq |D_S|=1+t^{v_1}<t^{v_1}+2 =\frac{5+6+6t^{v_1}+1}{6}=\frac{m+1}{6}$. Assume that $G'' \in \mathcal{G}_4$. By the present structure of $G$, each component of $\mathcal{H}_3^{v_1}$ is symmetric with $G''$. If there is a component $H' \in \{G''\} \cup \mathcal{H}_3^{v_1}$ with $\{v_1u\} = E(\{v_1\},V(H'))$ for some $u \in V(H')$ such that $u$ is not a connection vertex of $H'$, then let $u'$ be the connection vertex of the $C_4^+$-constituent of $H'$ containing $u$. In particular, $u=v$ if $G''$ is the component $H'$. Let $D_{G''}^{v_1}$ be the set of all connection vertices of $G''$. It can be checked that $D=\{v_1\} \cup D_{H'}^{v_1}\setminus \{u'\} \cup \bigcup_{H\in \{G''\}\cup \mathcal{H}_3^{v_1}\setminus \{H'\}}D_H^{v_1}$ is a $C_4$-isolating set of $G$. Now $|E(G'')|=6|D_{G''}^{v_1}|-1$ and $m=5+6|D_{G''}^{v_1}|-1+1+6t^{v_1}=5+6|D_{G''}^{v_1}|+6t^{v_1}$, and we have $\iota(G,C_4) \leq |D|=|D_{G''}^{v_1}|+t^{v_1}<|D_{G''}^{v_1}|+t^{v_1}+1 =\frac{5+6|D_{G''}^{v_1}|+6t^{v_1}+1}{6}=\frac{m+1}{6}$. Otherwise, for each component $H \in \{G''\} \cup \mathcal{H}_3^{v_1}$ with $\{v_1u\} = E(\{v_1\},V(H))$ and $u \in V(H)$, $u$ is a connection vertex of $H$. Then, $G \in \mathcal{G}_4$ by the definition of $\mathcal{G}_4$, where $v_1$ is a connection vertex of $G$ and $G[\{v_1\}\cup V(H_1^{v_1})]$ is the $C_4^+$-constituent of $G$ containing $v_1$.

(ii) If $c_1^{v_1}=0$, then $c_2^{v_1}=0$. Since $\mathcal{H}_s^{v_1} \neq \emptyset$, $c_3^{v_1} \geq 1$. We shall prove this case by slightly modifying the set $D_S$ before the inequality (2). For each $H\in \mathcal{H}_3^{v_1}=\mathcal{H}_s^{v_1}$, let $v_1u_H^{v_1} \in E(\{v_1\},V(H))\subseteq E(V(H),N[v])$ for some $u_{H}^{v_1} \in V(H)$, and $(u_H^{v_1})'$ be the connection vertex of the $C_4^+$-constituent of $H$ containing $u_H^{v_1}$. Let $D_S'=\bigcup_{H\in \mathcal{H}_3^{v_1}}\left(\{u_H^{v_1}\}\cup D_{H}^{v_1}\setminus \{(u_H^{v_1})'\}\right)$. It is easy to check that $D_S'$ is a $C_4$-isolating set of $G[S]$, and each component of $G[S]-N[D_S]$ has at most one edge which joins a vertex of the component to a vertex of $N(v) \setminus \{v_1\}$. By (1), $c_1^{v_1}=c_2^{v_2}=0$, Lemmas \ref{lem2.1} and \ref{lem2.2}, and the induction hypothesis, we have
\begin{equation*}
	\begin{array}{lllll}
		\iota(G,C_4)&\leq |D_S'|+\iota(G'',C_4)\\
		&= |\bigcup_{H\in \mathcal{H}_3^{v_1}}\left(\{u_H^{v_1}\}\cup D_{H}^{v_1}\setminus \{(u_H^{v_1})'\}\right)|+\sum_{H \in (\mathcal{H}')^{v_1}}\iota(H,C_4)+\iota(G_v,C_4)\\
		&\leq t^{v_1}+\sum_{H \in (\mathcal{H}')^{v_1}}\frac{|E(H)|}{6}+\frac{|E(G_v)|+1}{6}\\
		&=t^{v_1}+\frac{|E(G'')|+1}{6}\\
		&\leq t^{v_1}+\frac{m-5c_1^{v_1}-6c_2^{v_1}-6t^{v_1}-(c')^{v_1}-1+1}{6}\\
		&=\frac{m+1}{6}+\frac{-1-(c')^{v_1}}{6}<\frac{m+1}{6}.
	\end{array}
\end{equation*}

\vspace{3mm}
\noindent{\bf Subcase 2.2:} For each $H \in \mathcal{H}_s$, $|N(H)| \geq 2$ and $e(V(H),N[v]) \geq 3$.
\vspace{3mm}

Recall that $\mathcal{H}_s \neq \emptyset$. By the present assumption, we set $S=N[v] \cup \bigcup_{H \in \mathcal{H}_s}V(H)$ and $G''=G-S$. Then, $G''$ consists of components of $\mathcal{H}'$. (i) If $\mathcal{H}' = \emptyset$, then $G$ is the graph obtained from $G[N[v]]$ and the components of $\mathcal{H}_s$ such that each component of $\mathcal{H}_s$ is joined to at least two vertices of $N(v)$ by at least three edges. Since $G'=G-N[v]$, $e=e(N[v],V(G')) \geq 3|\mathcal{H}_s|=3s=3(c_1+c_2+c_3)$. For each $H \in \mathcal{H}_3$, let $D_H$ be the set of all connection vertices of $H$, and let $t(H)=|D_H|$. Clearly, $\sum_{H \in \mathcal{H}_{3}}|E(H)|=6t-c_3$, where $t=\sum_{H \in \mathcal{H}_{3}}t(H)$. Hence, $m=|E(G[N[v]])|+e(N[v],V(G'))+\sum_{H \in \mathcal{H}_{s}}|E(H)| \geq \Delta+3(c_1+c_2+c_3)+4c_1+5c_2+6t-c_3=\Delta+7c_1+8c_2+2c_3+6t$, where $\Delta \geq 4$, $t \geq 0$ and $c_1+c_2+c_3 \geq 1$.

For each $H \in \mathcal{H}_1 \cup \mathcal{H}_2$, let $v_{H}u_{H} \in E(N(v),V(H))$ for some $v_{H} \in N(v)$ and $u_{H} \in V(H)$. It is observed that $D=\{v\}\cup \bigcup_{H \in \mathcal{H}_1 \cup \mathcal{H}_2}\{u_{H}\}\cup \bigcup_{H \in \mathcal{H}_3}D_H$ is a $C_4$-isolating set of $G$. Therefore, we have $\iota(G,C_4) \leq |D|=1+c_1+c_2+t \leq \frac{\Delta+7c_1+8c_2+2c_3+6t+1}{6}\leq \frac{m+1}{6}$. The equality holds if and only if $\iota(G,C_4)=1+c_1+c_2+t$, $1+c_1+c_2+t=\frac{\Delta+7c_1+8c_2+2c_3+6t+1}{6}$ and $m=\Delta+7c_1+8c_2+2c_3+6t$. We shall deduce $c_1=1$, $c_2=c_3=0$, $t=0$, $\Delta=4$, $e=3$ and $G[N[v]] \cong K_{1,4}$.

Let $H^*$ be the unique component of $\mathcal{H}_1=\mathcal{H}_s$. Then, $H^* \cong C_4$ and $e=e(N[v],V(H^*))=3$. Hence, $G$ is the graph obtained from $H^*$ and $G[N[v]]$ by adding three edges between the leaves of $G[N[v]]$ and some vertices of $H^*$. It is easy to check that there exists a vertex $u$ of $H^*$ such that $e(N_{H^*}[u],N[v]) \geq 2$, and thus, $\{u\}$ is a $C_4$-isolating set of $G$. We have $\iota(G,C_4) \leq |\{u\}|=1<\frac{4+3+4+1}{6}=\frac{m+1}{6}$.

(ii) It follows that $\mathcal{H}' \neq \emptyset$, that is, $c' \geq 1$. Recall that $S=N[v] \cup \bigcup_{H \in \mathcal{H}_s}V(H)$ and $G-S=G'' \ncong (\emptyset,\emptyset)$. For each $H \in \mathcal{H}_3$, let $D_H$ and $t(H)$ be defined as in (i). Also, let $t=\sum_{H \in \mathcal{H}_{3}}t(H)$, and we have $\sum_{H \in \mathcal{H}_{3}}|E(H)|=6t-c_3$. Therefore,
\begin{equation}
	\begin{array}{lllll}
		|E(G'')|&=m-|E(G[S])|-e(S,V\setminus S)\\
		&=m-|E(G[N[v]])|-\sum_{H \in \mathcal{H}_{s}}(|E(H)|+e(V(H),N[v]))\\
		&~~~~~~-\sum_{H \in \mathcal{H}'}e(V(H),N[v])\\
		&\leq m-\Delta-7c_1-8c_2-(6t-c_3)-3c_3-c'\\
		&=m-\Delta-7c_1-8c_2-2c_3-6t-c'.
	\end{array}
\end{equation}

For each $H \in \mathcal{H}_1 \cup \mathcal{H}_2$, let $u_H$ be defined as in (i). It is easy to see that $D_S=\{v\}\cup \bigcup_{H \in \mathcal{H}_1 \cup \mathcal{H}_2}\{u_{H}\}\cup \bigcup_{H \in \mathcal{H}_3}D_H$ is a $C_4$-isolating set of $G[S]$, and there exist no edges between the vertices of each component of $G[S]-N[D_S]$ and $V(G'')$. By (3), Lemmas \ref{lem2.1} and \ref{lem2.2}, and the induction hypothesis, we have
\begin{equation}
	\begin{array}{lllll}
		\iota(G,C_4)&\leq |D_S|+\iota(G'',C_4)\\
		&= |\{v\}\cup \bigcup_{H \in \mathcal{H}_1 \cup \mathcal{H}_2}\{u_{H}\}\cup \bigcup_{H \in \mathcal{H}_3}D_H|+\sum_{H \in \mathcal{H}'}\iota(H,C_4)\\
		&\leq 1+c_1+c_2+t+\sum_{H \in \mathcal{H}'}\frac{|E(H)|}{6}\\
		&=1+c_1+c_2+t+\frac{|E(G'')|}{6}\\
		&\leq 1+c_1+c_2+t+\frac{m-\Delta-7c_1-8c_2-2c_3-6t-c'}{6}\\
		&=\frac{m+1}{6}+\frac{5-\Delta-c_1-2c_2-2c_3-c'}{6}.
	\end{array}
\end{equation}
Since $\Delta \geq 4$, $c_1+c_2+c_3 \geq 1$ and $c' \geq 1$, $5-\Delta-c_1-2c_2-2c_3-c'<0$. Therefore, $\iota(G,C_4) < \frac{m+1}{6}$ by (4). This completes the proof of Theorem \ref{th1.4}.

\end{document}